\newtheorem{theorem}{Theorem}[section]
\newtheorem{lemma}[theorem]{Lemma}
\newtheorem{proposition}[theorem]{Proposition}
\newtheorem{corollary}[theorem]{Corollary}
\newtheorem{definition}[theorem]{Definition}
\newenvironment{proof}{\noindent{\em Proof:}}{\quad \hfill$\Box$\vspace{2ex}}
\newcommand*\mathinhead[2]{\texorpdfstring{$\boldsymbol{#1}$}{#2}}
\newcommand*\patchAmsMathEnvironmentForLineno[1]{%
  \expandafter\let\csname old#1\expandafter\endcsname\csname #1\endcsname
  \expandafter\let\csname oldend#1\expandafter\endcsname\csname end#1\endcsname
  \renewenvironment{#1}%
     {\linenomath\csname old#1\endcsname}%
     {\csname oldend#1\endcsname\endlinenomath}}%
\newcommand*\patchBothAmsMathEnvironmentsForLineno[1]{%
  \patchAmsMathEnvironmentForLineno{#1}%
  \patchAmsMathEnvironmentForLineno{#1*}}%
\newcommand{\refcheckize}[1]{%
  \expandafter\let\csname @@\string#1\endcsname#1%
  \expandafter\DeclareRobustCommand\csname relax\string#1\endcsname[1]{%
    \csname @@\string#1\endcsname{##1}\wrtusdrf{##1}}%
  \expandafter\let\expandafter#1\csname relax\string#1\endcsname
}
\def \mA {\mathsf A}
\newcommand{\vb}{\boldsymbol{b}}
\newcommand{\vc}{\boldsymbol{c}}
\newcommand{\ve}{\boldsymbol{e}}
\newcommand{\vt}{\boldsymbol{t}}
\newcommand{\vu}{\boldsymbol{u}}
\newcommand{\vy}{\boldsymbol{y}}
\newcommand{\vzero}{\boldsymbol{0}}
\def \bI {\mathbb I}
\def \bN {\mathbb N}
\def \bR {\mathbb R}
\def \bA {\mathbb A}
\def \bx {{\bf x}}
\def \by {{\boldsymbol y}}
\def \bb {{\boldsymbol b}}
\def \bc {{\boldsymbol c}}
\def \bz {{\boldsymbol z}}
\def \bp {{\boldsymbol p}}
\def \bq {{\boldsymbol q}}
\def \bb {{\boldsymbol b}}
\def \bv {{\boldsymbol v}}
\def \bu {{\boldsymbol u}}
\def \bt {{\boldsymbol t}}
\def \bK {{\bf K}}
\def \cB {{\cal B}}
\def \cI {{\cal I}}
\def \cS {{\cal S}}
\def \cH {{\cal H}}
\def \supp {\,{\rm supp}\,}
\def \diag {\,{\rm diag}\,}
\newcommand{\Rmnum}[1]{\expandafter\@slowromancap\romannumeral #1@}
\begin{document}

\title{\bf Multi-task Learning in Vector-valued Reproducing Kernel Banach Spaces with the $\ell^1$ Norm}
\author{Rongrong Lin\thanks{School of Data and Computer Science, Sun Yat-sen University, Guangzhou, P. R. China. E-mail address: {\it linrr@mail2.sysu.edu.cn.} Supported in part by Fundamental Research Funds for the Central Universities under grant 18lgpy64.},
\quad Guohui Song\thanks{Department of Mathematics, Clarkson University, 8 Clarkson Ave, Potsdam, NY 13699, USA. E-mail address: {\it gsong@clarkson.edu. }  Supported in part by NSF DMS-1521661 and Natural Science Foundation of China under grant 11701383.},
\quad and \quad Haizhang Zhang\thanks{{\it Corresponding author}. School of Data and Computer Science, and Guangdong Province Key Laboratory of Computational Science, Sun Yat-sen University, Guangzhou, P. R. China. E-mail address: {\it zhhaizh2@mail.sysu.edu.cn}. Supported in part by Natural Science Foundation of China under grants 11571377 and 11222103.}}
\date{}
\maketitle

\begin{abstract}
Targeting at sparse multi-task learning, we consider regularization models with an $\ell^1$ penalty on the coefficients of kernel functions. In order to provide a kernel method for this model, we construct a class of vector-valued reproducing kernel Banach spaces with the $\ell^1$ norm. The notion of multi-task admissible kernels is proposed so that the constructed spaces could have desirable properties including the crucial linear representer theorem. Such kernels are related to bounded Lebesgue constants of a kernel interpolation question. We study the Lebesgue constant of multi-task kernels and provide examples of admissible kernels. Furthermore, we present numerical experiments for both synthetic data and real-world benchmark data to demonstrate the advantages of the proposed construction and regularization models.

{\bf Keywords:} Reproducing kernel Banach spaces, admissible multi-task kernels, Lebesgue constants, representer theorems

\end{abstract}

\section{Introduction}
Reproducing kernel Banach spaces (RKBSs) and their applications have attracted a lot of attention in machine learning community \cite{Christensen12, FHY15,Fukumizu2011, Han2009, SongZhang,SZH13,XY18,Ye13,ZZ11, ZZ12}. In particular, RKBSs with the $\ell^{1}$ norm \cite{SongZhang,SZH13} have proven to be useful in promoting sparsity in single task learning. On the other hand, vector-valued function spaces \cite{ARL2012,Carmeli,Micchelli05} provide a solid foundation for many models in multi-task learning. The purpose of this paper is to construct vector-valued RKBSs with the $\ell^1$ norm and study regularization methods for multi-task learning in such spaces.

Reproducing kernel Hilbert spaces (RKHSs) are Hilbert spaces of functions on which point evaluation functionals are continuous \cite{Aronszajn}. In machine learning, RKHSs have been viewed as ideal spaces for kernel-based learning algorithms \cite{Cuckerzhou07,Scholkopf2001,Steinwart08,Wendland2005}. Thanks to the existence of an inner product, Hilbert spaces  are well-understood in functional analysis. Most importantly, an RKHS has a reproducing kernel, which measures similarity between inputs and gives birth to the ``kernel trick" in machine learning that significantly saves computations. Celebrated machine learning methods based on scalar-valued RKHSs include support vector machines and the regularization networks.

The RKBS is a recent and fast-growing research area. We mention two reasons that justify the need of RKBSs here. On one hand, Banach spaces possess richer geometrical structures and norms.  It is standard knowledge in functional analysis that any two Hilbert spaces on a common number field of the same dimension are isometrically isomorphic to each other, and hence share the same norms and geometry. By contrast,  for $1\le p\ne q\le+\infty$, $L^p([0,1])$ and $L^q([0,1])$ are not  isomorphic to each other.  On the other hand, many important problems such as $p$-norm coefficient-based regularization \cite{TCY2010}, large-margin classification \cite{Der2007,Ye13,ZXZ09}, lasso in statistics \cite{Tibshirani1996} and compressed sensing \cite{CRT2006} had better be studied in Banach spaces.

There are various approaches to constructing scalar-valued RKBSs in the literature. For example, \cite{ZXZ09} employs the tool of semi-inner-products to build RKBSs and \cite{XY18} constructs RKBSs based on certain feature mappings. In particular, a bilinear form has been used to develop RKBSs with the $\ell^1$ norm in \cite{SZH13}. Moreover, a recent work \cite{LinUnifRKBS} gives a unified definition of RKBSs that is more general than the aforementioned specific ones. It also proposed a unified framework of constructing scalar-valued RKBSs that covers all existing constructions \cite{FHY15,SongZhang,SZH13,XY18,Ye13,ZZ11, ZZ12} via a continuous bilinear form and a pair of feature maps.

We consider multi-task learning in this paper. Many real-world applications involve learning multiple tasks. A standard methodology in machine learning is to learn one task at a time. Large problems are hence broken into small and reasonably independent subproblems that are learned separately and then recombined. Multi-task learning where the unknown target function to be learned from finite sample data is vector-valued appears more often in practice \cite{Micchelli05}. Learning multiple related tasks simultaneously can be more beneficial. For instance, in certain circumstances, data for each task are not enough to avoid over-fitting and hence results in poor generalization ability. In this case, what is learned for each task can help other related tasks be learned better. There are numerical experiments in the literature \cite{ARL2012,Caruana1997} which demonstrate that multi-task learning can lead to better generalization performance than learning each task independently. Recent progress about  multi-task learning in vector-valued RKHSs can be found in \cite{Caponnetto,Carmeli}. In such a framework, both the space of the candidate functions used for approximation and the output space are chosen as Hilbert spaces. Mathematical theory of learning on vector-valued RKBSs based on semi-inner-products has been proposed in \cite{ZZ13}. The spaces considered there are reflexive and thus do not accommodate the $\ell^1$ norm.

Motivated by sparse multi-task learning, we shall construct vector-valued RKBSs with the $\ell^1$ norm in this paper.
To ensure that the existence of a reproducing kernel, the construction starts directly with an admissible multi-task kernel satisfying three assumptions: non-singularity, boundedness, and independence. Then, we are able to obtain a vector-valued RKBS with the $\ell^1$ norm and its associated reproducing kernel.

Moreover, we will investigate the regularization model in such spaces. The classical linear representer theorem is a key to the mathematical analysis of kernel methods in machine learning \cite{Scholkopf2001}. It asserts that the minimizer is a linear combination of the kernel functions at the sampling points.  The representer theorem in scalar-valued RKHSs was initially established by Kimeldorf and Wahba \cite{Kimeldorf1971}. The result was generalized to other regularizers in \cite{SHS01}.
Recent references \cite{FHY15,LinUnifRKBS,XY18,Ye13,ZXZ09,ZZ12} developed representer theorems for various scalar-valued RKBSs. We shall present the representer theorem for machine learning schemes in vector-valued RKBSs with the $\ell^1$ norm. We shall see that this is equivalent to requiring the Lebesgue constant  of the admissible multi-task kernel to be exactly bounded by $1$. To accommodate more kernels, we consider a relaxed representer theorem.

The outline of the paper is as follow.
In Section \ref{sec:construction}, we present definitions of vector-valued RKBSs, the associated reproducing kernels, and admissible multi-task kernels.
We next start constructing RKBSs of vector-valued functions with the $\ell^1$ norm.
Section \ref{sec:representer} establishes representer theorems for minimal norm interpolation and regularization networks in the constructed spaces.
Examples of admissible multi-task kernels are given in Section \ref{sec:examples}. To accommodate more kernel functions, a relaxed version of linear representer theorem is discussed in Section \ref{sec:relaxed}. In the last section, numerical experiments for both synthetic data and real-world benchmark data are presented to demonstrate the advantages of the proposed construction and regularization models.

\section{Construction of vector-valued RKBSs with the \mathinhead{\ell^1}{l1} norm }\label{sec:construction}
We shall present the construction of vector-valued RKBSs with the $\ell^1$ norm in this section. Specifically, we will first introduce the definition of general vector-valued RKBSs and then construct the specific vector-valued RKBSs with the $\ell^1$ norm.

To give a formal definition of vector-valued RKBSs in our setting, we first review the definition of Banach spaces of vector-valued functions. A normed vector space $V$ of functions from $X$ to $Y\subseteq\bR^d$ is called a {\it Banach space of vector-valued functions} if it is a Banach space whose elements are vector-valued functions on $X$ and for each $f\in V$, $\|f\|_{V}=0$ if and only if $f(x)={\bf 0}$ for all $x\in X$. Here, $\vzero$ denotes the zero vector of $\bR^d$.
For instance, $L^p([0,1])$, $1\le p<+\infty$ is not a Banach space of functions while $C([0,1])$ is. The definition of general vector-valued RKBSs is presented below.

\begin{definition}[Vector-valued RKBS]\label{def:vvRKBS}
Let $X$ be a prescribed nonempty set, and let $Y$ be a Banach space.
A vector-valued RKBS $\cB$ of functions from  $X$ to $Y$ is a Banach space of certain vector-valued functions $f: X\to Y$ such that every point evaluation functional $\delta_x$, $x\in X$ on $\cB$ is continuous. That is, for any $x\in X$, there exists a constant $C_x>0$ such that
$$
\|\delta_x(f)\|_{Y}=\|f(x)\|_Y\le C_x\|f\|_{\cB}\mbox{ for all }f\in\cB.
$$
\end{definition}
Definition \ref{def:vvRKBS} is a natural ``vectorized'' generalization of the scalar-valued RKHS \cite{Aronszajn,Scholkopf2001} and the scalar-valued RKBS in \cite{LinUnifRKBS}. In \cite{Micchelli05}, a Hilbert space $\cH$ from $X$ to a Hilbert space $Y$ with inner product $\langle \cdot,\cdot\rangle_Y$ is called an RKHS of  vector-valued functions if for any $y\in Y$ and $x\in X$, the linear functional  which maps $f\in \cH$ to $\langle y,f(x)\rangle_Y$ is continuous. With the tool of semi-inner product, reference \cite{ZZ13} initially proposed the notion of vector-valued RKBS for multi-task learning in 2013. The prerequisite is that $\cB$ and $Y$ are uniform Banach spaces. Those requirements more or less seem unnatural. We are able to remove them by exploiting the definition of reproducing kernels via continuous bilinear forms.

We remark that there are no kernels directly mentioned in the above definition of the general vector-valued RKBSs. We will introduce a definition of the associated reproducing kernel through bilinear forms. Recall that a bilinear form between two normed vector spaces $V_1$ and $V_2$ is a function $(\cdot,\cdot)_{V_1\times V_2}$ from $V_1\times V_2$ to $\bR$ that is linear about both arguments. It is said to be a {\it continuous bilinear form} if there exists a positive constant $C$ such that
$$
|\langle f,g\rangle_{V_1\times V_2}|\le C\|f\|_{V_1}\|g\|_{V_2}\mbox{ for all }f\in V_1, g\in V_2.
$$

From now on, we assume the output space $Y=\bR^d$. A vector $\vc\in Y$ is always viewed as a $d\times 1$ column vector and we denote by $\vc^{\top}$ its transpose.
\begin{definition}[Reproducing Kernel]\label{def:RepKernel}
Let $X$ be a nonempty set, and let $\cB$ be a vector-valued RKBS from $X$ to $\bR^d$. If there exists a Banach space $\cB^{\#}$ of vector-valued functions  from $X$ to $\bR^d$, a continuous bilinear form $\langle \cdot,\cdot\rangle_{\cB\times\cB^\#}$, and a matrix-valued function $\bK: X\times X\to\bR^{d\times d}$ such that $\bK(\cdot,x)\vc\in\cB^\#$ for all $x\in X$ and $\vc\in\bR^d$, and
\begin{equation}\label{RPeq1}
(f,\bK(\cdot,x)\vc)_\bK=f(x)^{\top}\vc\ \mbox{ for all }  x\in X, \vc\in\bR^d, f\in\cB,
\end{equation}
then we call $\bK$ a reproducing kernel for $\cB$. If in addition, $\cB^\#$ is also a vector-valued RKBS, $\bK(x,\cdot)\vc\in\cB$ for all $x\in X$ and $\vc\in\bR^d$, and
\begin{equation}\label{RPeq2}
(\bK(x,\cdot)\vc,g)_\bK=\vc^{\top}g(x) \mbox{ for all } x\in X, \vc\in\bR^d, g\in\cB^{\#},
\end{equation}
then we call $\cB^\#$ an adjoint vector-valued RKBS of $\cB$, and call $\cB$ and $\cB^\#$ a  pair of vector-valued RKBSs. In the latter case, $\tilde{K}(x,x'):=K(x',x)$ for $x,x'\in X$, is a reproducing kernel for $\cB^\#$.
\end{definition}
We call (\ref{RPeq1}) and (\ref{RPeq2}) the {\it reproducing properties} for the kernel $\bK$ in vector-valued RKBSs $\cB$ and $\cB^\#$.

We shall next construct the specific vector-valued RKBSs with the $\ell^1$ norm satisfying the above conditions of general vector-valued RKBSs. The construction is built on certain multi-task kernels. To this end, we first introduce admissible multi-task kernels and some related notations. For any vector $\vu$ and $p\in [1,\infty]$, we use $\|\vu\|_p$ to denote the $\ell^p$ norm of $\vu$. For any matrix $\mA$ and $p\in [1,\infty]$, we use $\|\mA\|_p$ to denote the $\ell^p$-induced matrix norm of $\mA$. We denote for any nonempty set $\Omega$ by $\ell^1_d(\Omega)$ the Banach space of vector-valued functions on $\Omega$ that is integrable with respect to the counting measure on $\Omega$. Specifically,
\begin{equation}\label{ell1Omega}
\ell^1_d(\Omega):=\Big\{\vc=(\vc_t\in\bR^d:t\in\Omega):\|{\bf c}\|_{\ell^1_d(\Omega)}=\sum_{t\in\Omega}\|\vc_t\|_1<+\infty\Big\}.
\end{equation}
Note that $\Omega$ might be uncountable, but for every ${\bf c}\in\ell_d^1(\Omega)$, the support $\supp \vc:=\{t\in\Omega: \vc_t\neq \vzero\}$ must be at most countable. Let us denote $\bN_m:=\{1,2,\dots,m\}$ for any $m\in\bN$.

\begin{definition}[Admissible Multi-task Kernel]\label{def:AdmissibleKernel} Let $X$ be a nonempty set and let
$\bK:X\times X\to\bR^{d\times d}$ be a matrix-valued function such that $\bK^{\top}=\bK$. Such a kernel is an admissible multi-task kernel if the following assumptions are satisfied:
\begin{description}
  \item[(A1)] (Non-singularity) for all $m\in\bN$ and all pairwise distinct sampling points $\bx=\{x_j:j\in\bN_m\}\subseteq X$, the matrix
  $$
  \bK[\bx]:=\Big[\bK(x_k,x_j): j,k\in\bN_m\Big]\in \bR^{md\times md}
  $$
  is non-singular;
  \item[(A2)] (Boundedness) there exists $\kappa>0$ such that $\|\bK(x,x')\|_1\le \kappa$ for all $x,x'\in X$;
  \item[(A3)] (Independence) for all pairwise distinct points $x_j\in X$, $j\in\bN$ and $(\vc_j\in\bR^d:j\in\bN)\in \ell_d^1(\bN)$, if $\sum_{j\in\bN}\bK(x_j,x)\vc_j=\vzero$ for all $x\in X$ then $\vc_j=\vzero$ for all $j\in\bN$.
\end{description}
\end{definition}

We now present the construction of vector-valued RKBSs with the $\ell^1$ norm based on admissible multi-task kernels. Suppose that $\bK:X\times X\to\bR^{d\times d}$ is an admissible multi-task kernel defined above. We then define
\begin{equation}\label{eq:defBK}
\cB_\bK:=\Big\{\sum_{x\in\supp \vc}\bK(x,\cdot)\vc_x: \vc=(\vc_x\in\bR^d:x\in X)\in\ell_d^1(X)\Big\}
\end{equation}
with the norm
\begin{equation}\label{NormbK}
\Big\|\sum_{x\in\supp{\vc}}\bK(x,\cdot)\vc_x\Big\|_{\cB_\bK}:=\|\vc\|_{\ell^1_d(X)},
\end{equation}
where $\ell_d^1(X)$ is given as in (\ref{ell1Omega}).  By ({\bf A3}), we should point out that the norm given by (\ref{NormbK}) is well-defined. In other words, for any $f\in\cB_\bK$, $\|f\|_{\cB_\bK}=0$ if and only if $f=\vzero$ everywhere on $X$.

We next show that the Banach space of functions $\cB_\bK$ defined above is a vector-valued RKBS on $X$ according to Definition \ref{def:vvRKBS}.

\begin{proposition}
If $\bK$ is an admissible multi-task kernel, then the space $\cB_\bK$ as defined in Equation (\ref{eq:defBK}) is a vector-valued RKBS on $X$ in the sense that
$$
\|f(x)\|_1\le \kappa\|f\|_{\cB_\bK}, \mbox{ for all } x\in X, f\in\cB_\bK.
$$
\end{proposition}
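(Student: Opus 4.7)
The plan is to verify three things in turn: that the formula in (\ref{NormbK}) actually defines a norm (i.e.\ it is independent of how an element of $\cB_\bK$ is written as a kernel expansion), that $\cB_\bK$ is a Banach space of vector-valued functions, and finally the displayed point-evaluation bound, from which the continuity of each $\delta_x$ follows.

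First I would address well-definedness, which is the crux of the construction and the main place where the hypotheses are used non-trivially. Given $f\in\cB_\bK$ written as $f=\sum_{x'\in\supp\vc}\bK(x',\cdot)\vc_{x'}=\sum_{x'\in\supp\vc'}\bK(x',\cdot)\vc'_{x'}$ for two sequences $\vc,\vc'\in\ell_d^1(X)$, I would form the difference $\vc-\vc'\in\ell_d^1(X)$, which has at most countable support $\{x_j:j\in\bN\}$, and observe that $\sum_{j\in\bN}\bK(x_j,x)(\vc_{x_j}-\vc'_{x_j})=\vzero$ for every $x\in X$. Assumption (\textbf{A3}) then forces $\vc_{x_j}=\vc'_{x_j}$ for every $j$, so $\|\vc\|_{\ell_d^1(X)}=\|\vc'\|_{\ell_d^1(X)}$. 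The same argument shows that the linear map $T:\ell_d^1(X)\to\cB_\bK$, $\vc\mapsto\sum_{x'\in\supp\vc}\bK(x',\cdot)\vc_{x'}$, is an isometric bijection, so $\cB_\bK$ inherits completeness from $\ell_d^1(X)$ and is a Banach space. I expect this well-definedness/injectivity step to be the only subtle point, since it is precisely what independence was tailored to deliver.

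Next I would prove the claimed inequality, which is a direct computation from (\textbf{A2}). For $f=\sum_{x'\in\supp\vc}\bK(x',\cdot)\vc_{x'}\in\cB_\bK$ and any $x\in X$, the triangle inequality for $\|\cdot\|_1$ on $\bR^d$, the defining property of the $\ell^1$-induced matrix norm, and assumption (\textbf{A2}) give
$$
\|f(x)\|_1\le \sum_{x'\in\supp\vc}\|\bK(x',x)\vc_{x'}\|_1\le \sum_{x'\in\supp\vc}\|\bK(x',x)\|_1\,\|\vc_{x'}\|_1\le \kappa\sum_{x'\in\supp\vc}\|\vc_{x'}\|_1=\kappa\|f\|_{\cB_\bK}.
$$
This bound also furnishes the remaining ``Banach space of functions'' requirement from the paragraph preceding Definition \ref{def:vvRKBS}: if $\|f\|_{\cB_\bK}=0$ then $\|f(x)\|_1=0$ for every $x$, so $f\equiv\vzero$ on $X$; the converse follows from the well-definedness step, since $f\equiv\vzero$ together with (\textbf{A3}) forces the representing sequence to vanish identically. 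With the displayed inequality in hand, taking $C_x=\kappa$ for every $x$ shows that each point evaluation $\delta_x$ is continuous, so $\cB_\bK$ satisfies Definition \ref{def:vvRKBS}.
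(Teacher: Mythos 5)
Your proof is correct and the key step---the chain of inequalities using the triangle inequality, the $\ell^1$-induced matrix norm, and assumption (\textbf{A2})---is exactly the computation in the paper's own proof. The additional care you take with well-definedness of the norm via (\textbf{A3}) and the isometry with $\ell^1_d(X)$ is sound and welcome; the paper merely asserts these points in the remark preceding the proposition rather than arguing them inside the proof.
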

\begin{proof}
Note that $\ell^1_d(X)$ is a Banach space. By definition (\ref{NormbK}) of the norm on $\cB_\bK$, $\cB_\bK$ is a vector-valued Banach space on $X$. For any $f\in\cB_\bK$, there exists $\vc\in \ell_d^1(X)$ such that
$$
f=\sum_{t\in\supp{\bf c}}\bK(t,\cdot)\vc_t.
$$
For any $x\in X$, by Assumption ({\bf A2}) and Equation (\ref{NormbK}), we compute
$$
\|\delta_x(f)\|_1=\|f(x)\|_1\le \sum_{t\in\supp{\vc}}\Big\|\bK(t,x)\vc_t\Big\|_1\le \sum_{t\in\supp{\vc}}\Big\|\bK(t,x)\Big\|_1\|\vc_t\|_1\le \kappa\sum_{t\in\supp{\vc}}\|\vc_t\|_1= \kappa\|f\|_{\cB_\bK}
$$
for all $f\in\cB_\bK$. In other words, the point evaluation functional $\delta_x$, $x\in X$ is continuous on $\cB_\bK$ in the sense that
$\|\delta_x(f)\|_1\le \kappa\|f\|_{\cB_\bK}$ for all $f\in\cB_\bK$. The proof is hence complete.
\end{proof}

We next show that $\bK$ is a reproducing kernel of $\cB_\bK$ through checking the conditions in Definition \ref{def:RepKernel}. For this purpose, we introduce an adjoint vector-valued RKBS $\cB_\bK^\#$ below. Let
\begin{equation*}
\cB_0^{\#}:=\Big\{\sum_{k=1}^{n} \bK(\cdot,x_k)\vb_k: x_k\in X, \vb_k\in\bR^d, k\in\bN_n \mbox{ for all }n\in\bN\Big\}
\end{equation*}
endowed with the supremum norm
\begin{equation*}
\Big\|\sum_{k=1}^{n} \bK(\cdot,x_k)\vb_k\Big\|_{\cB^\#_0}:=\sup_{t\in X}\Big\|\sum_{k=1}^{n} \bK(t,x_k)\vb_k\Big\|_\infty.
\end{equation*}
We point out that an abstract completion of $\cB_0^\#$ might not consist of functions. To this end, we let $\cB_\bK^{\#}$ be the completion of $\cB_0^{\#}$ under the supremum norm by the Banach completion process described below.  Suppose $\{g_n:n\in\bN\}$ is a Cauchy sequence in $\cB_0^\#$.
Observe that point evaluation functionals $\delta_x$, $x\in X$ are continuous on $\cB_0^\#$ in the sense that
$$
\|g(x)\|_{\infty}\le \|g\|_{\cB_0^\#}\mbox{ for all }g\in\cB_0^\#.
$$
Consequently, for any $x\in X$, the sequence $\{g_n(x):n\in\bN\}$ converges in $\bR^d$.  We define the limit by $g(x)$, which is a vector-valued function on $X$.
Clearly, two equivalent Cauchy sequences in $\cB_0^\#$ give the same function. We let $\cB_\bK^\#$ be consisting of all such limit functions $g$ with the norm
$$
\|g\|_{\cB_\bK^\#}:=\lim_{n\to+\infty}\|g_n\|_{\cB_0^\#}=\lim_{n\to+\infty}\sup_{x\in X}\|g_n(x)\|_{\infty}.
$$
It follows that for any $g\in\cB_\bK^\#$,
$$
\|g\|_{\cB_\bK^\#}:=\sup_{x\in X}\|g(x)\|_\infty.
$$

Based on the above construction, we immediately have that $\cB_\bK^\#$ defined above is also a vector-valued RKBS.
\begin{proposition}
If $\bK$ is an admissible multi-task kernel, then the space $\cB^\#_\bK$ is a vector-valued Banach spaces in the sense that
\begin{equation*}
\|g(x)\|_\infty\le \|g\|_{\cB_\bK^\#} \mbox{ for all } x\in X, g\in \cB_\bK^\#.
\end{equation*}
\end{proposition}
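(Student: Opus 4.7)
The plan is to identify $\cB_\bK^\#$ with a closed subspace of the Banach space $\ell^\infty(X,\bR^d)$ of bounded $\bR^d$-valued functions on $X$ under the supremum norm; once that identification is in hand, both the Banach-space-of-functions property and the point evaluation bound fall out immediately.

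The first step is to check that $\cB_0^\#$ already embeds isometrically into $\ell^\infty(X,\bR^d)$. Every $f=\sum_{k=1}^{n}\bK(\cdot,x_k)\vb_k$ is bounded on $X$: from $\bK^\top=\bK$ and Assumption ({\bf A2}) one has $\|\bK(t,x_k)\|_\infty=\|\bK(x_k,t)\|_1\le \kappa$, so $\|f(t)\|_\infty\le \kappa\sum_{k=1}^n\|\vb_k\|_\infty$ uniformly in $t\in X$, and the norm on $\cB_0^\#$ is by definition $\sup_{t\in X}\|f(t)\|_\infty$. Hence the inclusion $\cB_0^\#\hookrightarrow \ell^\infty(X,\bR^d)$ is isometric.

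The second step is to upgrade the pointwise convergence used in the construction to uniform convergence. A Cauchy sequence $\{g_n\}\subseteq \cB_0^\#$ is uniformly Cauchy on $X$ because its norm is the supremum norm, so it converges uniformly to a bounded function $g:X\to\bR^d$, which agrees with the pointwise limit defined in the construction. The reverse triangle inequality then gives
$$
\|g\|_{\cB_\bK^\#}=\lim_{n\to+\infty}\sup_{t\in X}\|g_n(t)\|_\infty=\sup_{t\in X}\|g(t)\|_\infty,
$$
and the pointwise bound $\|g(x)\|_\infty\le\|g\|_{\cB_\bK^\#}$ is immediate from taking $t=x$. This bound also secures the remaining requirement in Definition \ref{def:vvRKBS}: if $\|g\|_{\cB_\bK^\#}=0$, then $\|g(x)\|_\infty=0$ for every $x\in X$, so $g\equiv\vzero$; combined with completeness inherited from the Banach completion process, this confirms that $\cB_\bK^\#$ is a Banach space of vector-valued functions on $X$.

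I do not foresee any substantive obstacle. The only conceptual subtlety is that the abstract completion a priori produces equivalence classes of Cauchy sequences rather than functions, but this is exactly what the isometric embedding into $\ell^\infty(X,\bR^d)$ resolves. Neither admissibility condition ({\bf A1}) nor ({\bf A3}) is needed for this proposition; only the boundedness assumption ({\bf A2}) enters, and only to ensure that the supremum norm defining $\cB_0^\#$ is finite.
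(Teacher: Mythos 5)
Your proof is correct and follows essentially the same route as the paper, which states this proposition without a separate proof precisely because it is immediate from the preceding construction of $\cB_\bK^\#$ as the set of (uniform) limits of Cauchy sequences from $\cB_0^\#$ with $\|g\|_{\cB_\bK^\#}=\sup_{x\in X}\|g(x)\|_\infty$; your isometric embedding into $\ell^\infty(X,\bR^d)$ just makes that construction explicit. Your observation that only ({\bf A2}) is needed (to make the supremum norm finite on $\cB_0^\#$) is also accurate, and the identity $\|\bK(t,x_k)\|_\infty=\|\bK(x_k,t)\|_1\le\kappa$ holds under the paper's convention $\bK^\top=\bK$ together with ({\bf A2}).
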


We next characterize the reproducing properties in $\cB_\bK$ and $\cB_\bK^\#$ via a bilinear form. To this end, we define the linear space $\cB_0$ by
\begin{equation}\label{B0}
\cB_0:=\Big\{\sum_{j=1}^{m} \bK(x_j,\cdot)\vc_j:x_j\in X, \vc_j\in\bR^d,m\in\bN\Big\}.
\end{equation}
Clearly, $\cB_\bK$ is a Banach completion of $\cB_0$ under the $\ell^1$ norm.
We define a bilinear form $(\cdot,\cdot)_\bK$ on $\cB_0\times\cB_0^\#$  by
\begin{equation*}
\Big(\sum_{j=1}^m \bK(x_j,\cdot)\vc_j,\sum_{k=1}^{n}\bK(\cdot,s_k)\vb_k\Big)_{\bK}=\sum_{k=1}^{n}\sum_{j=1}^m  \vc_j^{\top}\bK(x_j,s_k)\vb_k,\  s_j,t_k\in X,\, \vc_j, \vb_k\in\bR^d.
\end{equation*}
According to the norms on $\cB_\bK$ and $\cB_0^\#$, we obtain
\begin{eqnarray*}
\Big|\Big(\sum_{j=1}^m \bK(x_j,\cdot)\vc_j,\sum_{k=1}^{n}\bK(\cdot,s_k)\vb_k\Big)_{\bK}\Big|
&\le& \sum_{j=1}^m \Big|\vc_j^{\top} \Big(\sum_{k=1}^{n} \bK(x_j,s_k)\vb_k\Big)\Big|\\
&\le& \sum_{j=1}^m \|\vc_j\|_1 \Big\|\sum_{k=1}^{n} \bK(x_j,s_k)\vb_k\Big\|_\infty\\
&\le& \Big(\sum_{j=1}^m \|\vc_j\|_1\Big) \Big(\sup_{t\in X}\Big\|\sum_{k=1}^{n} \bK(t,s_k)\vb_k\Big\|_\infty\Big)\\
&=&\Big\|\sum_{j=1}^m \bK(x_j,\cdot)\vc_j\Big\|_{\cB_0}\Big\|\sum_{k=1}^{n}\bK(\cdot,s_k)\vb_k\Big\|_{\cB_0^\#}.
\end{eqnarray*}
It implies that the bilinear form $(\cdot,\cdot)_\bK$ is continuous on $\cB_0\times\cB_0^\#$.
By applying the Hahn-Banach extension theorem twice, the bilinear form can be extended to $\cB_\bK\times\cB_\bK^\#$ such that
$$
|\langle f,g\rangle_{\bK}|\le \|f\|_{\cB_\bK}\|g\|_{\cB_\bK^\#}\mbox{ for all }f\in\cB_\bK, g\in\cB_\bK^\#.
$$

Finally, we are ready to show that $\bK$ is a reproducing kernel for $\cB_\bK$.

\begin{theorem}  If $\bK:X\times X\to\bR^{d\times d}$ is an admissible multi-task kernel then $\cB_\bK$ and $\cB_\bK^\#$ are a pair of vector-valued RKBSs, and $\bK$ is a reproducing kernel for $\cB_\bK$ and $\cB_\bK^\#$.
\end{theorem}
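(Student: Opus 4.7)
The plan is to verify the three remaining requirements of Definition \ref{def:RepKernel}: that $\bK(x,\cdot)\vc \in \cB_\bK$ and $\bK(\cdot,x)\vc \in \cB_\bK^\#$ for every $x \in X$ and $\vc \in \bR^d$, and that the extended bilinear form $(\cdot,\cdot)_\bK$ satisfies the two reproducing identities (\ref{RPeq1}) and (\ref{RPeq2}). That $\cB_\bK$ and $\cB_\bK^\#$ are vector-valued RKBSs has already been established in the two preceding propositions, while the membership requirements are immediate from the constructions: $\bK(x,\cdot)\vc$ is a one-term generator in (\ref{eq:defBK}) and $\bK(\cdot,x)\vc$ is a one-term element of $\cB_0^\# \subset \cB_\bK^\#$.

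To check (\ref{RPeq1}) and (\ref{RPeq2}), I would first verify them on the dense subspaces $\cB_0 \subset \cB_\bK$ and $\cB_0^\# \subset \cB_\bK^\#$ by direct computation from the defining formula of the bilinear form. For $f = \sum_{j=1}^m \bK(x_j,\cdot)\vc_j \in \cB_0$ and the one-term element $\bK(\cdot,x)\vc \in \cB_0^\#$ the formula yields $(f, \bK(\cdot,x)\vc)_\bK = \sum_j \vc_j^\top \bK(x_j,x)\vc$, which, after invoking the symmetry $\bK^\top = \bK$, equals $\bigl(\sum_j \bK(x_j,x)\vc_j\bigr)^\top \vc = f(x)^\top \vc$; the corresponding calculation for (\ref{RPeq2}) with $\bK(x,\cdot)\vc \in \cB_0$ and $g \in \cB_0^\#$ collapses at once to $\vc^\top g(x)$. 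To pass from the dense subspaces to the full spaces, I would fix $\bK(\cdot,x)\vc \in \cB_0^\#$ and take any $f \in \cB_\bK$ together with a Cauchy sequence $\{f_n\} \subset \cB_0$ converging to $f$; letting $n \to \infty$ in $(f_n, \bK(\cdot,x)\vc)_\bK = f_n(x)^\top\vc$ gives (\ref{RPeq1}), thanks to continuity of the extended bilinear form in its first argument and continuity of $\delta_x$ on $\cB_\bK$. The identity (\ref{RPeq2}) is obtained by the symmetric argument, approximating $g \in \cB_\bK^\#$ by a Cauchy sequence in $\cB_0^\#$ and using continuity of point evaluation on $\cB_\bK^\#$.

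The main obstacle I expect is not the reproducing computation itself but ensuring that the bilinear form is unambiguously defined and that its extension is uniquely determined on the pairs needed for the two identities. Well-definedness on $\cB_0 \times \cB_0^\#$ requires that the explicit sum-over-sampling-points formula be independent of the chosen representation; this rests on Assumption (A1), which forces the coefficients to be uniquely determined once the finite set of sampling points is fixed. The independence hypothesis (A3) plays the analogous role when passing to the countably supported expansions used in (\ref{NormbK}) on $\cB_\bK$. Finally, although Hahn-Banach extensions are in general non-unique, the extension of $(\cdot,\cdot)_\bK$ supplied by the preceding construction is effectively obtained by continuity along the dense subsets $\cB_0$ and $\cB_0^\#$, so its values on the pairs appearing in (\ref{RPeq1}) and (\ref{RPeq2}) are pinned down by the computations performed on the dense subsets.
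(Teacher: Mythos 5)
Your proposal is correct and follows essentially the same route as the paper: verify the reproducing identities on the finite linear spans $\cB_0$ and $\cB_0^\#$ by direct computation using $\bK^\top=\bK$, then pass to general $f\in\cB_\bK$ (resp.\ $g\in\cB_\bK^\#$) via the continuity of the extended bilinear form and of the point evaluations. Your additional remarks on well-definedness and on the extension being pinned down on the relevant pairs by density are sensible elaborations of points the paper leaves implicit, not a different argument.
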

\begin{proof}
It is sufficient to verify the reproducing properties for $\bK$ in $\cB_\bK$ and $\cB_\bK^\#$.
Recall that $(\cdot,\cdot)_\bK$ is a continuous bilinear form on $\cB_\bK\times\cB_\bK^{\#}$. For any $f\in\cB_\bK$, there exist distinct points $x_j\in X$, $j\in\bN$ and ${\bf c}\in\ell^1_d(\bN)$ such that
$f=\sum_{j\in\bN}\bK(x_j,\cdot)\vc_j$. Since $\bK=\bK^{\top}$, it follows from a direct computation
$$
\begin{array}{ll}
\displaystyle{ (f,\bK(\cdot,x)\vb)_\bK }
&\displaystyle{=\lim_{n\to+\infty}\Big(\sum_{j=1}^n \bK(x_j,\cdot)\vc_j,\bK(\cdot,x)\vb\Big)=\lim_{n\to+\infty}\sum_{j=1}^n \vc_j^{\top}\bK(x_j,x)\vb } \\
&\displaystyle{=\lim_{n\to+\infty}\Big(\sum_{j=1}^n \bK(x_j,x)\vc_j\Big)^{\top}\vb=f(x)^{\top}\vb }\\
\end{array}
$$
for any $x\in X$. The reproducing property for $\bK$ in $\cB_\bK^{\#}$ follows in a similar way.
\end{proof}

\section{Representer Theorems}\label{sec:representer}
The linear representer theorems play a fundamental role in regularized learning schemes in machine learning. It helps us to turn the infinite-dimensional optimization problem to an equivalent optimization problem in a finite-dimensional subspace. We shall establish in this section representer theorems for the minimal norm interpolation problem and regularization networks in the vector-valued RKBSs $\cB_\bK$ with the $\ell^1$ norm constructed in Section \ref{sec:construction}.

\subsection{Minimal Norm Interpolation}

A minimal norm interpolation problem in a vector-valued RKBS $\cB_\bK$ with respect to a set of sampling data $\{(x_j,\vy_j):j\in\bN_m\}\subseteq X\times\bR^d$ is to solve
\begin{equation}\label{eq:minnorminterp}
\min_{f\in \cI_{\bx}(\by)} \|f\|_{\cB_\bK}, \mbox{ where }\cI_{\bx}(\vy):=\Big\{f\in\cB_\bK: f(\bx)=\vy\Big\}.
\end{equation}
We always assume the minimizer of the above problem exists in this paper.

We say the vector-valued RKBS $\cB_\bK$ has the {\it linear representer theorem for minimal norm interpolation} if for any integer $m\in\bN$ and any choice of sampling data $\{(x_j,\vy_j):j\in\bN_m\}\subseteq X\times\bR^d$,  there always exists a minimizer of the problem \eqref{eq:minnorminterp} in the following finite-dimensional subspace:
\begin{equation}\label{eq:finitesubpace}
\cS^{\bx}:=\Big\{\sum_{j=1}^m\bK(x_j,\cdot)\vc_j: \vc_j\in\bR^d,\ j\in\bN_m\Big\}.
\end{equation}
We point out that the finite-dimensional subspace $\cS^{\bx}$ has dimension $md$.

We shall next investigate the linear representer theorem for minimal norm interpolation. We remark that the interpolation space $\cI_{\bx}(\vy)$ is infinite-dimensional in general. We need to show the minimal norm interpolation in $\cB_\bK$ is equivalent to that in the finite-dimensional subspace in $\cS^{\bx}$. To this end, we first show the minimal norm interpolation in a finite-dimensional subspace containing $\cS^{\bx}$ could be reduced to that in $\cS^{\bx}$. In particular, we consider the finite-dimensional subspace $\cS^{\tilde{\bx}}$, where $\tilde{\bx}:=\bx\cup \{x_{m+1}\}$ and $x_{m+1}\in X\setminus \bx$.

For notational simplicity, we denote
$$
\bK_\bx (t):=(\bK(t,x_j):j\in\bN_m)^{\top}\in\bR^{md\times d},\  \bK^\bx (t):=(\bK(x_j,t):j\in\bN_m)\in\bR^{d\times md},\ t\in X.
$$
It is worthwhile to point out that $\bK_\bx$ is in general not the transpose of $\bK^\bx$. If, in addition, $\bK$ is symmetric in the sense that $\bK(x,x')=\bK(x',x)$ for all $x,x'\in X$ then by $\bK=\bK^{\top}$ we have $\bK_\bx^{\top}=\bK^{\bx}$.

We will present a necessary and sufficient condition for the equivalence of the minimal norm interpolation in $\cS^{\tilde{\bx}}$ to that in $\cS^{\bx}$.

\begin{lemma}\label{lemmaA4}
Suppose the multi-task kernel $\bK$ is admissible. Let $\bx=\{x_j:j\in\bN_m\}$ be a set of pairwise distinct points, let $x_{m+1}$ be an arbitrary point in $X\setminus \bx$, and let $\tilde{\bx}=\bx\cup \{x_{m+1}\}$. Then
\begin{equation}\label{representoreq1}
\min_{f\in\cI_{\bx}(\vy)\cap \cS^{\bar{\bx}}}\|f\|_{\cB_\bK}=\min_{f\in\cI_{\bx}(\vy)\cap \cS^{\bx}}\|f\|_{\cB_\bK}, \quad \mbox{ for all }\vy\in \bR^{md}
\end{equation}
if and only if $\|\bK[\bx]^{-1}\bK_\bx(x_{m+1})\|_1\le 1$.
\end{lemma}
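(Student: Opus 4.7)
The plan is to parametrize $\cS^{\tilde{\bx}}$ by its unique coefficient vector, express both constrained minima as explicit optimization problems in the single free block $\vc_{m+1}\in\bR^d$, and then identify the exact condition that makes $\vc_{m+1}=\vzero$ always optimal. By assumption (A3), every $f\in\cS^{\tilde{\bx}}$ admits a unique representation $f=\sum_{j=1}^{m+1}\bK(x_j,\cdot)\vc_j$, and (\ref{NormbK}) then yields $\|f\|_{\cB_\bK}=\sum_{j=1}^{m+1}\|\vc_j\|_1$. Writing $\vc':=(\vc_1^{\top},\ldots,\vc_m^{\top})^{\top}\in\bR^{md}$, the interpolation constraint $f(\bx)=\vy$ translates into the block-linear system
\begin{equation*}
\bK[\bx]\,\vc' + \bK_{\bx}(x_{m+1})\,\vc_{m+1}=\vy,
\end{equation*}
where the $\vc'$-coefficient is obtained from the index conventions in the definitions of $\bK[\bx]$ and $\bK_{\bx}(\cdot)$ together with the matrix-symmetry $\bK^{\top}=\bK$. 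Assumption (A1) makes $\bK[\bx]$ invertible, so $\vc'=\vu - M\vc_{m+1}$ with $\vu:=\bK[\bx]^{-1}\vy$ and $M:=\bK[\bx]^{-1}\bK_{\bx}(x_{m+1})\in\bR^{md\times d}$. Restricting to $\cS^{\bx}$ forces $\vc_{m+1}=\vzero$; thus, since $\vu$ ranges freely over $\bR^{md}$ as $\vy$ does, (\ref{representoreq1}) is equivalent to the assertion that $\vc_{m+1}=\vzero$ minimizes
\begin{equation*}
h_{\vu}(\vc_{m+1}):=\|\vu - M\vc_{m+1}\|_1 + \|\vc_{m+1}\|_1
\end{equation*}
for every $\vu\in\bR^{md}$.

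For sufficiency, assume $\|M\|_1\le 1$. The triangle inequality and the definition of the induced $\ell^1$-norm give
\begin{equation*}
h_{\vu}(\vc_{m+1})\ge \|\vu\|_1 - \|M\vc_{m+1}\|_1 + \|\vc_{m+1}\|_1 \ge \|\vu\|_1 + (1-\|M\|_1)\|\vc_{m+1}\|_1 \ge \|\vu\|_1 = h_{\vu}(\vzero),
\end{equation*}
so $\vzero$ is always optimal. For necessity, suppose $\|M\|_1>1$. By definition of the induced $\ell^1$-norm there exists $\vc^{\ast}\in\bR^d\setminus\{\vzero\}$ with $\|M\vc^{\ast}\|_1>\|\vc^{\ast}\|_1$. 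Choosing $\vu:=M\vc^{\ast}$ (equivalently $\vy:=\bK[\bx]\vu$) one computes $h_{\vu}(\vc^{\ast})=\|\vc^{\ast}\|_1<\|M\vc^{\ast}\|_1=\|\vu\|_1=h_{\vu}(\vzero)$, which contradicts (\ref{representoreq1}).

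The main obstacle I anticipate is the reduction step rather than the two inequality arguments, which are short. One must verify that the coefficient of $\vc_{m+1}$ in the interpolation system is exactly $\bK_{\bx}(x_{m+1})$ and that the coefficient of $\vc'$ is exactly $\bK[\bx]$; this requires carefully combining the index conventions in Definition~\ref{def:AdmissibleKernel} with the matrix-symmetry $\bK^{\top}=\bK$ and the definition of $\bK_{\bx}(\cdot)$. Once that algebra is pinned down, the problem collapses to a clean $\ell^1$ minimization over $\bR^d$ whose two directions follow immediately from the triangle inequality and the extremal characterization of the induced $\ell^1$-norm.
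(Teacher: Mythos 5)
Your proof is correct, and it arrives at the same finite-dimensional $\ell^1$ inequality as the paper, but by a genuinely different and arguably cleaner reduction. The paper parametrizes the competing interpolants $\tilde f\in\cI_{\bx}(\vy)\cap\cS^{\tilde{\bx}}$ by their value $\vb=\tilde f(x_{m+1})$ and recovers the coefficients through the $2\times 2$ blockwise inversion of $\bK[\tilde{\bx}]$ (a Schur-complement computation involving the auxiliary quantities $\bp$ and $\bq$); your free parameter $\vc_{m+1}$ is exactly the paper's $-\bp^{-1}\bq$, so the two parametrizations are affinely equivalent. By working directly with the coefficient $\vc_{m+1}$ and solving the $m$-point constraint $\bK[\bx]\vc'+\bK_{\bx}(x_{m+1})\vc_{m+1}=\vy$ for $\vc'$, you bypass the block-inversion formula and the (harmless but extra) appeal to nonsingularity of the Schur complement $\bp$; what this costs you is only the uniqueness of the coefficient representation, which you correctly obtain from ({\bf A3}). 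From that point on, the sufficiency step (triangle inequality together with $\|M\vc\|_1\le\|M\|_1\|\vc\|_1$) is identical to the paper's, and your necessity step is the paper's argument with the standard basis vector $\ve_j$ replaced by a general vector $\vc^{\ast}$ witnessing $\|M\|_1>1$: the paper's choice $\vy=\bK_{\bx}(x_{m+1})\ve_j$ is precisely your $\vu=M\ve_j$. The index-checking worry you raise at the end resolves exactly as you anticipate: since $\bK^{\top}=\bK$, the $j$-th block of $\bK_{\bx}(x_{m+1})$ is $\bK(x_{m+1},x_j)$, which is the coefficient of $\vc_{m+1}$ in $f(x_j)$, and similarly the blocks of $\bK[\bx]$ match the coefficients of $\vc'$, so your linear system is the correct one.
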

\begin{proof}
Notice that the set $\cI_{\bx}(\vy)\cap \cS^{\bx}$ possesses only one interpolant $f(t)=\bK^{\bx}(t)\bK[\bx]^{-1}\vy$, $t\in X$.
Let $\tilde{f}\in \cI_\bx(\vy)\cap \cS^{\tilde{\bx}}$ and $\vb:=\tilde{f}(x_{m+1})$.  Note that $\tilde{f}$ is uniquely determined by $\vb$ as it has already satisfied the interpolation condition $\tilde{f}(\bx)=\vy$. Specifically, by the admissible assumption (A1), we get $\tilde{f}(t)=\bK^{\tilde{\bx}}(t)\bK[\tilde{\bx}]^{-1}\tilde{\vy}$, $t\in X$,
where $\tilde{\vy}:=(\vy^{\top},\vb)^{\top}\in\bR^{(m+1)d}$.
It follows from a result (see for instance, \cite{Rasmussen2006}, pages 201-202) concerning the inversion of $2\times 2$ blockwise invertible matrix that
$$
\begin{array}{ll}
\displaystyle{\bK[\tilde{\bx}]^{-1}\tilde{\vy}}
&\displaystyle{=
\left[
  \begin{array}{cc}
    \bK[\bx] & \bK_\bx(x_{m+1}) \\
    \bK^\bx(x_{m+1}) & \bK(x_{m+1},x_{m+1}) \\
  \end{array}
\right]^{-1}
\left[
  \begin{array}{c}
    \vy \\
    \vb \\
  \end{array}
\right]}\\
&\displaystyle{=\left[
  \begin{array}{cc}
   \bK[\bx]^{-1}+\bK[\bx]^{-1}\bK_\bx(x_{m+1})\bp^{-1}\bK^\bx(x_{m+1})\bK[\bx]^{-1} & -\bK[\bx]^{-1}\bK_\bx(x_{m+1})\bp^{-1}\\
    -\bp^{-1}\bK^\bx(x_{m+1})\bK[\bx]^{-1} & \bp^{-1} \\
  \end{array}
\right]
\left[
  \begin{array}{c}
    \vy \\
    \vb \\
  \end{array}
\right]}\\
&\displaystyle{=
\left[
  \begin{array}{c}
    \bK[\bx]^{-1}\vy+  \bK[\bx]^{-1}\bK_\bx(x_{m+1})\bp^{-1}\bq \\
    -\bp^{-1}\bq \\
  \end{array}
\right]}\\
\end{array}
$$
where the matrix
$$
\bp:=\Big[\bK(x_{m+1},x_{m+1})-\bK^\bx(x_{n+1})\bK[\bx]^{-1}\bK_\bx(x_{m+1})\Big]\in \bR^{d\times d}
$$
is non-singular and the column vector $\bq$ is
$$
\bq:=\bK^\bx(x_{m+1})\bK[\bx]^{-1}\vy-\vb\in \bR^{d}.
$$

We now show sufficiency. If $\|\bK[\bx]^{-1}\bK_\bx(x_{m+1})\|_1\le 1$, then
$$
\begin{array}{ll}
\displaystyle{\|\tilde{f}\|_{\cB_\bK}}
\displaystyle{=\|\bK[\tilde{\bx}]^{-1}\tilde{\vy}\|_1}
&\displaystyle{\ge \|\bK[\bx]^{-1}\by+\bK[\bx]^{-1}\bK_\bx(x_{n+1})\bp^{-1}\bq\|_1+\|\bp^{-1}\bq\|_1}\\
&\displaystyle{\ge \|\bK[\bx]^{-1}\vy\|_1-\| \bK[\bx]^{-1}\bK_\bx(x_{n+1})\bp^{-1}\bq\|_1+\|\bp^{-1}\bq\|_1}\\
&\displaystyle{\ge \|\bK[\bx]^{-1}\vy\|_1-\| \bK[\bx]^{-1}\bK_\bx(x_{n+1})\|_1\|\bp^{-1}\bq\|_1+\|\bp^{-1}\bq\|_1  }\\
&\displaystyle{\ge \|\bK[\bx]^{-1}\vy\|_1}\\
&\displaystyle{=\|f\|_{\cB_\bK}}\\
\end{array}
$$
which implies
$$
\min_{f\in\cI_{\bx}(\vy)\cap \cS^{\tilde{\bx}}}\|f\|_{\cB_\bK}\ge \min_{f\in\cI_{\bx}(\vy)\cap \cS^{\bx}}\|f\|_{\cB_\bK}, \mbox{ for all }\vy\subseteq \bR^{md}.
$$
Since $\cS^{\bx}\subseteq\cS^{\tilde{\bx}}$, the reverse direction of this inequality holds.
Thus, (\ref{representoreq1}) holds true.

Conversely, if (\ref{representoreq1}) is true for all $\vy\in\bR^{md}$ then we must have
$$
\|\bK[\tilde{\bx}]^{-1}\tilde{\vy}\|_1\ge \|\bK[\bx]^{-1}\vy\|_1, \mbox{ for all } \vy\in\bR^{md}\mbox{ and all }  \vb\in\bR^d.
$$
Fix $j\in\bN_d$. In particular, if we choose
$$
\vy:=\bK_\bx (x_{m+1})\ve_j, \mbox{ and } \vb:=\bK^\bx(x_{m+1}) \bK[\bx]^{-1}\vy+\bp \ve_j,
$$
where $\ve_j$ is a column vector in $\bR^d$ whose $j$-th component is $1$ and other components are $0$, then
$$
\bq=-\bp \ve_j,\ \bp^{-1}\bq=-\ve_j\mbox{ and }\bK[\bx]^{-1}\by+  \bK[\bx]^{-1}\bK_\bx(x_{m+1})\bp^{-1}\bq={\vzero}_{md},
$$
where ${\vzero}_{md}$ denotes the zero column vector in $\bR^{md}$. Consequently
$$
\|\bK[\tilde{\bx}]^{-1}\tilde{\vy}\|_1=
\left\|
  \begin{array}{c}
    {\vzero}_{md} \\
    \ve_j \\
  \end{array}
\right\|_1=1
\mbox{ and }
\|\bK[\bx]^{-1}\vy\|_1=\|\bK[\bx]^{-1}\bK_{\bx}(x_{m+1})\ve_j\|_1.
$$
Recalling the definition of the $\ell^1$ norm of matrices, we get $\|\bK[\bx]^{-1}\bK_\bx(x_{m+1})\|_1\le 1$.
The proof is complete.
\end{proof}

We are now ready to present the following necessary and sufficient condition for the equivalence of the minimal norm interpolation in $\cB_\bK$ to that in $\cS^{\bx}$:

({\bf Lebesgue Constant Condition})  For all $m\in\bN$ and all pairwise distinct sampling points $\bx=\{x_j:j\in\bN_m\}\subseteq X$,
\begin{equation}\label{eq:A4}
\sup_{t\in X}\|\bK[\bx]^{-1}\bK_{\bx}(t)\|_1\le 1,
\end{equation}
where $\bK_\bx (t)=(\bK(t,x_j):j\in\bN_m)^{\top}$, $t\in X$.

\begin{theorem}\label{thm:repinterp}
Suppose $\bK$ is an admissible multi-task kernel. The space $\cB_\bK$ satisfies the linear representer theorem for minimal norm interpolation if and only if the Lebesgue constant condition \eqref{eq:A4} holds.
\end{theorem}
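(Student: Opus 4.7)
The plan is to deduce the theorem from Lemma~3.1 in both directions. The key object on the $\cS^{\bx}$ side is the explicit unique interpolant $f^\star(\cdot) = \bK^{\bx}(\cdot)\bK[\bx]^{-1}\vy$, which has norm $\|\bK[\bx]^{-1}\vy\|_1$. So both directions reduce to comparing an arbitrary norm in $\cI_{\bx}(\vy)$ with $\|\bK[\bx]^{-1}\vy\|_1$.

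For sufficiency, I would start by iterating Lemma~3.1. If the Lebesgue constant condition \eqref{eq:A4} holds at every finite set, then adding points one at a time shows that for every finite $\bx' \supseteq \bx$,
\[
\min_{f\in \cI_{\bx}(\vy)\cap \cS^{\bx'}}\|f\|_{\cB_\bK}=\|\bK[\bx]^{-1}\vy\|_1\quad\text{for all } \vy\in\bR^{md}.
\]
Now take any $f\in\cI_{\bx}(\vy)$ and expand $f=\sum_{k\ge 1}\bK(t_k,\cdot)\vc_{t_k}$ with $\sum_k\|\vc_{t_k}\|_1=\|f\|_{\cB_\bK}$, where $\{t_k\}$ is at most countable. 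Consider the truncations $f_n=\sum_{k=1}^{n}\bK(t_k,\cdot)\vc_{t_k}\in \cS^{\bx\cup\{t_1,\dots,t_n\}}$. They satisfy $f_n\to f$ in $\cB_\bK$, and by the continuity of the point-evaluation functionals on $\cB_\bK$, $\vy_n:=f_n(\bx)\to\vy$. Applying the iterated Lemma~3.1 identity to $f_n$ gives
\[
\|\bK[\bx]^{-1}\vy_n\|_1\;\le\;\|f_n\|_{\cB_\bK}.
\]
Passing to the limit $n\to\infty$ (both sides are continuous functions of $\vy_n$ and $\|f_n\|_{\cB_\bK}$ respectively) yields $\|f^\star\|_{\cB_\bK}=\|\bK[\bx]^{-1}\vy\|_1\le\|f\|_{\cB_\bK}$. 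Since $f\in\cI_{\bx}(\vy)$ was arbitrary, $f^\star\in\cS^{\bx}$ is a minimizer, proving the linear representer theorem.

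For necessity I would argue by contrapositive: assume \eqref{eq:A4} fails, so there exist pairwise distinct points $\bx=\{x_j:j\in\bN_m\}$ and some $x_{m+1}\in X\setminus\bx$ with $\|\bK[\bx]^{-1}\bK_{\bx}(x_{m+1})\|_1>1$. Set $\tilde{\bx}:=\bx\cup\{x_{m+1}\}$. By the ``only if'' part of Lemma~3.1, there is some $\vy\in\bR^{md}$ for which
\[
\min_{f\in \cI_{\bx}(\vy)\cap \cS^{\tilde{\bx}}}\|f\|_{\cB_\bK}\;<\;\min_{f\in \cI_{\bx}(\vy)\cap \cS^{\bx}}\|f\|_{\cB_\bK}.
\]
Because $\cS^{\tilde{\bx}}\subseteq\cB_\bK$, the minimum of $\|\cdot\|_{\cB_\bK}$ over $\cI_{\bx}(\vy)$ in $\cB_\bK$ is bounded above by the left-hand side, hence strictly below the right-hand side, so no element of $\cS^{\bx}$ can be a minimizer in $\cB_\bK$. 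This contradicts the representer theorem for this choice of $\vy$.

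The main obstacle I anticipate is the passage to the limit in the sufficiency direction, since the support of the coefficient sequence $\vc$ representing $f\in\cB_\bK$ may be genuinely countable rather than finite, so Lemma~3.1 is not directly applicable to $f$ itself. The truncation argument handles this, but it requires verifying that $f_n\to f$ in the $\cB_\bK$-norm (immediate from absolute summability of $\{\|\vc_{t_k}\|_1\}$), that the resulting $\vy_n$ converges to $\vy$ (from continuity of $\delta_{x_j}$), and that the iterated form of Lemma~3.1 really gives the uniform bound $\|\bK[\bx]^{-1}\vy_n\|_1\le\|f_n\|_{\cB_\bK}$ regardless of which additional sample points $t_1,\dots,t_n$ are adjoined. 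The iteration itself is clean because adjoining a single new point preserves \eqref{eq:A4} at every subsequent stage by hypothesis.
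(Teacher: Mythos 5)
Your proposal is correct and follows essentially the same route as the paper: necessity via Lemma \ref{lemmaA4} applied to an added point (the paper argues directly, you by contrapositive, which is the same content), and sufficiency by iterating Lemma \ref{lemmaA4} over finite supersets of $\bx$ and then passing to a general $f\in\cI_{\bx}(\vy)$ by a density/truncation limit, exactly as the paper does with its sequence $g_j\in\cB_0$ and the continuity of point evaluations. The only cosmetic difference is that you approximate $f$ by truncations of its own series representation rather than by an abstract approximating sequence from $\cB_0$.
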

\begin{proof} We first prove the necessity. The space $\cB_\bK$ satisfies the linear representer theorem for minimal norm interpolation if and only if  for any integer $m$ and any choice of sampling data $\{(x_j,\vy_j):j\in\bN_m\}\subseteq X\times\bR^d$
$$
\min_{g\in\cI_{\bx}(\vy)}\|g\|_{\cB_\bK}=\min_{f\in\cI_{\bx}(\vy) \cap \cS^{\bx}}\|f\|_{\cB_\bK}.
$$
Choose a new point $x_{m+1}\in X\setminus\bx$. Observe $\cI_{\bx}(\vy) \cap \cS^{\bx}\subseteq \cI_{\bx}(\vy) \cap \cS^{\tilde{\bx}}\subseteq \cI_{\bx}(\vy)$.
By Lemma \ref{lemmaA4},  we have $\|\bK[\bx]^{-1}\bK_\bx(x_{m+1})\|_1\le 1$.  Observe that $\|\bK[\bx]^{-1}\bK_\bx(t)\|_1= 1$ for any $t\in\bx$.
Since we can pick an arbitrary integer $m$ and an arbitrary point $x_{m+1}$,  this leads to \eqref{eq:A4}.

We next show the sufficiency. Suppose the Lebesgue constant condition \eqref{eq:A4} holds. Fix $m\in\bN$.
Since $\cI_{\bx}(\by) \cap \cS^{\bx}\subseteq \cI_{\bx}(\by)$,
$$
\min_{g\in\cI_{\bx}(\by)}\|g\|_{\cB_\bK}\le \min_{f\in\cI_{\bx}(\by) \cap \cS^{\bx}}\|f\|_{\cB_\bK}.
$$
It remains to prove the reverse direction of this inequality.  For this purpose, we shall first show $\|g\|_{\cB_\bK}\ge \min_{f\in\cI_{\bx}(\vy) \cap \cS^{\bx}}\|f\|_{\cB_\bK}$ for all $g\in \cI_{\bx}(\by)\cap \cB_0$, where the space $\cB_0$ is defined by (\ref{B0}). By $\cI_{\bx}(\vy) \cap \cS^{\bx}\subseteq \cI_{\bx}(\vy)\cap \cB_0$,
the set $\cI_{\bx}(\vy)\cap \cB_0$ is non-empty. We write $g$ as $g=\sum_{j=1}^n \bK({x}_j,\cdot)\vc_j$ for some integer $n\geq m$ and distinct points $\{{x}_j: j\in\bN_n\}$. This can always be done by adding some sampling points, setting the corresponding coefficients to be zero, and relabeling if necessary. Let $\vy_j:=g({x}_j)$, $j\in\bN_n$. Then set
$$
\bu_l:=(\vy_j: j\in\bN_l)^{T},\mbox{ and } \bv_l:=\{{x}_j:j\in\bN_l\} \mbox{ for }l=m,m+1,\dots,n.
$$
Note that $\vy=\bu_m$ and $\bx=\bv_m$. By $g\in \cI_{\bv_n}(\bu_n) \cap \cS^{\bv_n}$, it follows that
$\|g \|_{\cB_\bK}\geq \min_{f\in \cI_{\bv_n}(\bu_n) \cap \cS^{\bv_n}} \| f\|_{\cB_\bK}$.
Since $\cI_{\bv_n}(\bu_n) \subseteq \cI_{\bv_{n-1}}(\bu_{n-1})$, we apply Lemma \ref{lemmaA4} to get
$$
\min_{f\in \cI_{\bv_n}(\bu_n) \cap \cS^{\bv_n}} \| f\|_{\cB_\bK}\geq  \min_{f\in \cI_{\bv_{n-1}}(\bu_{n-1}) \cap \cS^{\bv_n}} \| f\|_{\cB_\bK} = \min_{f\in \cI_{\bv_{n-1}}(\bu_{n-1}) \cap \cS^{\bv_{n-1}}} \| f\|_{\cB_\bK}.
$$
It follows that $\|g \|_{\cB_\bK}\geq \min_{f\in \cI_{\bv_{n-1}}(\bu_{n-1}) \cap \cS^{\bv_{n-1}}} \| f\|_{\cB_\bK}$. Repeating this process, we get
\begin{equation}\label{repinterp1}
\|g \|_{\cB_\bK}\geq \min_{f\in \cI_{\bv_m}(\bu_m) \cap \cS^{\bv_m}} \| f\|_{\cB_\bK}=\min_{f\in\cI_{\bx}(\vy) \cap \cS^{\bx}}\|f\|_{\cB_\bK}\mbox{  for all }g\in\cI_{\bx}(\vy)\cap \cB_0.
\end{equation}

Now let $g\in\cI_{\bx}(\vy)$ be arbitrary but fixed. Recall that the completion of $\cB_0$ with respect to the $\ell^1$ norm is $\cB_\bK$.
Then there exists a sequence of vector-valued functions $g_j\in\cB_0$, $j\in\bN$ that converges to $g$ in $\cB_\bK$. We let $f$ and $f_j$ be the function in $\cS^{\bx}$ such that $f(\bx)=\by$ and $f_j(\bx)=g_j(\bx)$, $j\in\bN$. They are explicitly given by
$$
f= \bK^{\bx}(\cdot)\bK[\bx]^{-1}g(\bx) \quad \mbox{and} \quad f_j= \bK^{\bx}(\cdot)\bK[\bx]^{-1}g_j(\bx), \ \ j\in\bN.
$$
Since $g_j$ converges to $g$ in $\cB_\bK$ and point evaluation functionals are continuous on $\cB_\bK$, $g_j(\bx)\to g(\bx)$ as $j\to+\infty$. As a result,
$$
\lim_{j\to +\infty} \|f -f_j\|_{\cB_\bK}=\lim_{j\to +\infty}\|\bK[\bx]^{-1}(g(\bx)-g_j(\bx))\|_1\le \|\bK[\bx]^{-1}\|_1\lim_{j\to+\infty}\|g(\bx)-g_j(\bx)\|_1=0.
$$
By \eqref{repinterp1}, $\| g_j\|_{\cB_\bK} \geq \| f_j\|_{\cB_\bK}$ for all $j\in\bN$. It follows that $\|g\|_{\cB_\bK}\ge \|f\|_{\cB_\bK}$ and thus,
$$
\min_{g\in\cI_{\bx}(\by)}\|g\|_{\cB_\bK}\ge \min_{f\in\cI_{\bx}(\by) \cap \cS^{\bx}}\|f\|_{\cB_\bK}.
$$
The proof is complete.
\end{proof}

\subsection{Regularization Networks}
We will present a representer theorem for regularization network in the following form:
\begin{equation}\label{MinimizationBK1}
\min_{f\in\cB_{\bK}} L(f(\bx),\by)+\lambda\phi(\|f\|_{\cB_{\bK}}),
\end{equation}
where $L:\bR^{md}\times\bR^{md}\to\bR_+$ is a continuous loss function with the property $L(\vt,\vt)=0$ for any $\vt\in\bR^{md}$, and $\phi:\bR_+\to\bR_+$ a non-decreasing continuous regularizer with $\lim_{t\to+\infty}\phi(t)=+\infty$. We will always assume a minimizer of the above model \eqref{MinimizationBK1} exists.

Similarly, we say the vector-valued RKBS $\cB_\bK$ has {\it the linear representer theorem for regularization network}
if for any integer $m$, any choice of sampling data $\{(x_j,\vy_j):j\in\bN_m\}$ and any $\lambda>0$, there exists a minimizer of (\ref{MinimizationBK1}) in the finite-dimensional subspace $\cS^\bx$ as defined in \eqref{eq:finitesubpace}.

We shall establish a necessary and sufficient condition such that the linear representer theorem for regularization network holds in the vector-valued RKBS $\cB_\bK$. This is achieved by showing the equivalence between the representer theorems for the minimal norm interpolation problem and for the regularization network problem.

\begin{theorem}\label{thm:equiv-rep}
Suppose $\bK$ is an admissible multi-task kernel. The space $\cB_\bK$ satisfies the linear representer theorem for regularization network (\ref{MinimizationBK1})  if and only if it does so for minimal norm interpolation \eqref{eq:minnorminterp}.
\end{theorem}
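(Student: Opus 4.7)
The plan is to prove the two implications separately, leveraging that $\cS^{\bx}$ is finite-dimensional and that the sampling map on it is bijective by Assumption (A1). For the forward direction, suppose $\cB_\bK$ enjoys the representer theorem for minimal norm interpolation. Given an arbitrary RN minimizer $f^{*}$ of \eqref{MinimizationBK1}, set $\vy^{*} := f^{*}(\bx) \in \bR^{md}$ and apply the MNI representer theorem to the data pair $(\bx,\vy^{*})$; this yields some $\tilde f \in \cI_{\bx}(\vy^{*}) \cap \cS^{\bx}$ with $\|\tilde f\|_{\cB_\bK} \leq \|f^{*}\|_{\cB_\bK}$. Since $\tilde f(\bx) = \vy^{*} = f^{*}(\bx)$, the loss values agree, and monotonicity of $\phi$ together with the norm inequality shows that $\tilde f$ also minimizes the RN objective. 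Hence the RN representer theorem holds.

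For the converse, I would exhibit one concrete admissible loss--regularizer pair and run a $\lambda \to 0^{+}$ limit. Take $L(\bu,\bv):=\|\bu-\bv\|_{1}$, which is continuous and satisfies $L(\vt,\vt)=0$, and take $\phi(t):=t$, which is continuous, non-decreasing, and coercive. Fix $(\bx,\vy)$ and let $f^{*}$ be an MNI minimizer. For each $\lambda>0$, the assumed RN representer theorem supplies a minimizer $f_{\lambda}\in\cS^{\bx}$. Comparing the RN value at $f_{\lambda}$ with its value at $f^{*}$ (where $L(f^{*}(\bx),\vy)=0$) gives
\begin{equation*}
L(f_{\lambda}(\bx),\vy)+\lambda\|f_{\lambda}\|_{\cB_\bK}\;\leq\;\lambda\|f^{*}\|_{\cB_\bK},
\end{equation*}
so simultaneously $\|f_{\lambda}\|_{\cB_\bK}\leq\|f^{*}\|_{\cB_\bK}$ and $\|f_{\lambda}(\bx)-\vy\|_{1}\leq\lambda\|f^{*}\|_{\cB_\bK}\to 0$ as $\lambda\to 0^{+}$. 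By Assumption (A1), the evaluation map $E:\cS^{\bx}\to\bR^{md}$, $E(f)=f(\bx)$, is a linear bijection between finite-dimensional spaces and hence a homeomorphism; therefore $f_{\lambda}\to g:=\bK^{\bx}(\cdot)\bK[\bx]^{-1}\vy$ in $\cB_\bK$. Continuity of the norm then yields $\|g\|_{\cB_\bK}\leq\|f^{*}\|_{\cB_\bK}$, and since $g\in\cI_{\bx}(\vy)\cap\cS^{\bx}$, it is an MNI minimizer lying in $\cS^{\bx}$.

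The main subtlety is in the reverse direction, where one must pick an admissible loss--regularizer pair whose $\lambda\to 0^{+}$ asymptotics convert RN minimizers into MNI minimizers. The key enabling fact is Assumption (A1), which makes the sampling map on the finite-dimensional subspace $\cS^{\bx}$ invertible; this is what upgrades the pointwise control $f_{\lambda}(\bx)\to\vy$ into genuine $\cB_\bK$-convergence of the $f_{\lambda}$'s, with limit still inside $\cS^{\bx}$. The forward direction, by contrast, is essentially bookkeeping using monotonicity of $\phi$.
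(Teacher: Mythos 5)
Your proof is correct and follows essentially the same route as the paper: the forward direction projects the regularization-network minimizer onto $\cS^{\bx}$ via the minimal-norm-interpolation representer theorem, and the reverse direction fixes $\phi(t)=t$ with a specific loss and sends $\lambda\to 0^{+}$. The only cosmetic differences are your choice of the $\ell^1$ loss instead of the squared $\ell^2$ loss and your use of the invertibility of the evaluation map on $\cS^{\bx}$ to identify the limit $g$ directly, where the paper instead extracts a convergent subsequence of the bounded coefficient vectors $\bc_\lambda$.
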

\begin{proof}
Suppose we are given some sampling data $\{(x_j,\vy_j):j\in\bN_m\}\subseteq X\times\bR^d$ for some $m\in \bN$. We first assume that the space $\cB_\bK$ satisfies the linear representer theorem for minimal norm interpolation. We want to prove
\begin{equation}\label{eq:linearrepreg}
\min_{f\in\cB_\bK}L(f(\bx),\by)+\lambda\phi(\|f\|_{\cB_\bK})=\min_{f\in\cS^{\bx}}L(f(\bx),\by)+\lambda\phi(\|f\|_{\cB_\bK}).
\end{equation}
Note that the left hand side above is always bounded above by the right hand side as $\cS^{\bx}\subseteq \cB_\bK$. We only need to show the reverse inequality. For any $f\in\cB_\bK$, we consider the following minimal norm interpolation problem
\begin{equation*}
\min_{g\in\cI_{\bx}(f(\bx))}\|g\|_{\cB_\bK}.
\end{equation*}
Since $\cB_\bK$ satisfies the linear representer theorem for minimal norm interpolation, there exists a minimizer $f_0\in \cS^{\bx}$ of the above problem. Note that $f\in \cI_{\bx}(f(\bx))$. It follows that $f_0(\bx)=f(\bx)$ and $\|f_0\|_{\cB_\bK}\le \|f\|_{\cB_\bK}$. As a result, $L(f_0(\bx),\by)=L(f(\bx),\by)$ but $\phi(\|f_0\|_{\cB_\bK})\le\phi(\|f\|_{\cB_\bK})$ as $\phi$ is nondecreasing. That is, for any $f\in \cB_\bK$, there exists a $f_0\in \cS^{\bx}$ such that
\begin{equation*}
L(f_0(\bx),\by)+\lambda\phi(\|f_0\|_{\cB_\bK}) \leq L(f(\bx),\by)+\lambda\phi(\|f\|_{\cB_\bK}),
\end{equation*}
which implies the right hand side of (\ref{eq:linearrepreg}) is bounded above by the left hand side of (\ref{eq:linearrepreg}).

Moreover, we will prove the existence of the minimizer of right hand side of (\ref{eq:linearrepreg}). Since $\lim\limits_{t\to+\infty}\phi(t)=+\infty$, there exists a positive constant $\alpha$ such that
$$
\min_{f\in\cS^{\bx}}L(f(\bx),\by)+\lambda\phi(\|f\|_{\cB_\bK})=\min_{f\in\cS^{\bx},\|f\|_{\cB_\bK}\le \alpha}L(f(\bx),\by)+\lambda\phi(\|f\|_{\cB_\bK}).
$$
Note that the functional we are minimizing is continuous on $\cB_\bK$ by the assumption on $V$, $\phi$ and by the continuity of point evaluation functionals on $\cB_\bK$. By the elementary fact that a continuous function on a compact metric space attains its minimum in the space, the right hand side of (\ref{eq:linearrepreg}) has a minimizer that belongs to $\{f\in\cS^{\bx}:\|f\|_{\cB_\bK}\le \alpha\}$.

We next show the contrary part. That is, assuming $\cB_\bK$ satisfies the linear representer theorem for regularization networks, we need to show it also does so for minimal norm interpolation. We will find a minimizer of the minimal norm interpolation problem \eqref{eq:minnorminterp} explicitly. To this end, consider the regularization network \eqref{MinimizationBK1} with the following choices of $L$ and $\phi$:
\begin{equation*}
L(f(\bx),\by)=\|f(\bx)-\by\|^2_2, \quad\mbox{and}\quad \phi(t)=t.
\end{equation*}
For any $\lambda>0$, let $f_{0,\lambda}$ be a minimizer of (\ref{MinimizationBK1}) with the above choices of $L$ and $\phi$ in $\cS^{\bx}$. We could then write $f_{0,\lambda} = \bK^{\bx}(\cdot) \bc_\lambda$ for some $\bc_\lambda \in\bR^{md}$. It follows
\begin{equation*}
\|\bK[\bx] \bc_\lambda -\by\|^2_2=\|f_{0,\lambda}(\bx)-\by\|^2_2 \le L(f_{0,\lambda},\by) + \lambda \phi(\|f_{0,\lambda}\|_{\cB_\bK})\le L(0,\by)+\lambda\phi(\|0\|_{\cB_\bK})=\|\by\|_2^2.
\end{equation*}
Since $\bK[\bx]$ is nonsingular, the above inequality implies that $\{\bc_\lambda : \lambda>0\}$ forms a bounded set in $\bR^{md}$. By restricting to a subsequence if necessary, we may hence assume that $\bc_\lambda$ converges to some $\bc_0\in \bR^{md}$ as $\lambda$ goes to infinity. We then define $f_{0,0}:=\bK^{\bx}(\cdot)\bc_0$. It is clear that $f_{0,0}\in \cS^{\bx}$. We will show that $f_{0,0}$ is a minimizer of the minimal norm interpolation problem \eqref{eq:minnorminterp}.

Assume $g$ is an arbitrary interpolant in $\cI_{\bx}(\by)$. It is enough to show $f_{0,0}\in \cI_{\bx}(\by)$ and $\|f_{0,0}\|_{\cB_\bK} \leq  \|g \|_{\cB_\bK}$. By the definition of $f_{0,\lambda}$, we have
\begin{equation}\label{reginterpeq3}
\|f_{0,\lambda}(\bx)-\by\|^2_2 + \lambda\|f_{0,\lambda}\|_{\cB_\bK}\le \|g(\bx)-\by\|^2_2 + \lambda\|g\|_{\cB_\bK} =\lambda\|g\|_{\cB_\bK}.
\end{equation}
We observe that
\begin{equation}\label{eq:reginterp1}
\lim_{\lambda\to0}\|f_{0,\lambda}-f_{0,0}\|_{\cB_\bK}=\lim_{\lambda\to0}\| \bc_\lambda-\bc_0\|_1=0.
\end{equation}
Since point evaluation functionals are continuous on $\cB_\bK$, we have $f_{0,0}({x}_j)= \lim_{\lambda\to0} f_{0,\lambda}({x}_j) \mbox{ for all }j\in\bN_m$. Letting $\lambda\to 0$ on both sides of the above inequality \eqref{reginterpeq3}, we obtain $\|f_{0,0}(\bx)-\by\|^2_2=0$. That is, $f_{0,0}\in \cI_{\bx}(\by)$. Moreover, it also follows from \eqref{reginterpeq3} that $\|f_{0,\lambda}\|_{\cB_\bK} \le  \|g \|_{\cB_\bK}$ for all $\lambda>0$. This together with (\ref{eq:reginterp1}) implies $\|f_{0,0}\|_{\cB_\bK} \leq  \|g \|_{\cB_\bK}$, which finishes the proof.
\end{proof}

\begin{corollary} Suppose that $\bK$ is an admissible multi-task kernel. The space $\cB_\bK$ satisfies the linear representer theorem for regularization network (\ref{MinimizationBK1})  if and only if  $\bK$ satisfies the Lebesgue constant condition \eqref{eq:A4}.
\end{corollary}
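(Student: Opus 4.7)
The plan is essentially to observe that this corollary is a direct consequence of chaining the two equivalences already established, namely Theorem \ref{thm:repinterp} and Theorem \ref{thm:equiv-rep}. There is no new mathematical content to develop; the proof is a one-line transitivity argument.

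Concretely, I would proceed as follows. First, invoke Theorem \ref{thm:equiv-rep}, which asserts that $\cB_\bK$ satisfies the linear representer theorem for the regularization network \eqref{MinimizationBK1} if and only if it satisfies the linear representer theorem for the minimal norm interpolation problem \eqref{eq:minnorminterp}. Second, invoke Theorem \ref{thm:repinterp}, which in turn asserts that the latter property is equivalent to the Lebesgue constant condition \eqref{eq:A4}. Composing these two biconditionals immediately yields the claim.

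There is no genuine obstacle here, since both directions have already been handled in the previous two theorems and the admissibility of $\bK$ is the standing assumption throughout. The only bookkeeping is to make sure the quantifiers line up: the representer theorems for both problems are stated uniformly over all $m\in\bN$, all sampling sets $\bx\subseteq X$ (and, in the regularization case, all $\lambda>0$ and all admissible $L,\phi$), which matches the uniform quantification in the Lebesgue constant condition. Hence the proof can simply read: combine Theorem \ref{thm:equiv-rep} with Theorem \ref{thm:repinterp}.
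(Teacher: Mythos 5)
Your proposal is correct and matches the paper's proof exactly: the paper also derives the corollary as an immediate consequence of chaining Theorem \ref{thm:equiv-rep} with Theorem \ref{thm:repinterp}. Nothing further is needed.
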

\begin{proof}
This is an immediate consequence of Theorems \ref{thm:repinterp} and \ref{thm:equiv-rep}.
\end{proof}

\section{Examples of Admissible Multi-task Kernels}\label{sec:examples}
We shall present a few examples of admissible multi-task kernels in this section. In particular, we will investigate whether they satisfy the Lebesgue constant condition \eqref{eq:A4} such that the corresponding vector-valued RKBSs have the linear representer theorems. Positive together with some negative examples will both be given.

Consider a widely used class of multi-task kernels \cite{ARL2012,Caponnetto} in machine learning, which take the following form:
\begin{equation}\label{multitaskkernel}
\bK(x,x')=K(x,x')\bA,\ x,x'\in X,
\end{equation}
where $K:X\times X\to\bR$ is a scalar-valued positive definite kernel and $\bA$ denotes a $d\times d$ strictly positive definite symmetric matrix.
We reserve the notation $\bK$ in boldface type to denote the $d\times d$ matrix-valued function and $K$ the scalar-valued function as usual, respectively.

\begin{theorem} The kernel $\bK$ defined by (\ref{multitaskkernel}) is an admissible multi-task kernel if and only if $K$ is an admissible single-task kernel.
\end{theorem}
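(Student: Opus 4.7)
The plan is to verify the three defining properties (A1)--(A3) of admissibility one at a time, showing in each case that the property for the matrix-valued kernel $\bK(x,x') = K(x,x')\bA$ is equivalent to the corresponding property for the scalar kernel $K$, using crucially that $\bA$ is symmetric and strictly positive definite (hence invertible with $\|\bA\|_1 < +\infty$).

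First I would handle non-singularity (A1). The block matrix $\bK[\bx]$ has $(k,j)$-block equal to $K(x_k,x_j)\bA$, so $\bK[\bx]$ is exactly the Kronecker product $K[\bx]\otimes\bA$, where $K[\bx]=[K(x_k,x_j):j,k\in\bN_m]$. By the standard determinant identity $\det(K[\bx]\otimes\bA)=(\det K[\bx])^d(\det\bA)^m$ and $\det\bA\neq 0$, non-singularity of $\bK[\bx]$ is equivalent to non-singularity of $K[\bx]$. Next, boundedness (A2) is immediate: $\|\bK(x,x')\|_1 = |K(x,x')|\,\|\bA\|_1$, so a uniform bound on $\|\bK(x,x')\|_1$ holds iff a uniform bound on $|K(x,x')|$ holds, since $\|\bA\|_1$ is a fixed positive constant.

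The main content is the independence condition (A3). For the direction $\bK$ admissible $\Rightarrow$ $K$ admissible, suppose $(c_j)\in\ell^1(\bN)$ satisfies $\sum_j c_j K(x_j,x)=0$ for all $x\in X$. Fix any nonzero $\ve\in\bR^d$ and set $\vc_j := c_j\ve$; clearly $(\vc_j)\in\ell_d^1(\bN)$ and $\sum_j \bK(x_j,x)\vc_j = \bigl(\sum_j c_j K(x_j,x)\bigr)\bA\ve = \vzero$, so (A3) for $\bK$ yields $\vc_j=\vzero$, i.e.\ $c_j=0$. For the converse, assume $K$ satisfies (A3) and $\sum_j \bK(x_j,x)\vc_j=\vzero$ with $(\vc_j)\in\ell^1_d(\bN)$. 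Set $\vd_j := \bA\vc_j$, which still lies in $\ell^1_d(\bN)$ because $\|\vd_j\|_1 \le \|\bA\|_1\|\vc_j\|_1$. Then $\sum_j K(x_j,x)\vd_j = \vzero$ for all $x$; reading off the $i$-th coordinate gives $\sum_j d_{j,i}K(x_j,x)=0$, and $(d_{j,i})_{j\in\bN}\in\ell^1(\bN)$. Applying (A3) for $K$ in each coordinate $i\in\bN_d$ forces $\vd_j=\vzero$, and invertibility of $\bA$ gives $\vc_j=\vzero$.

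I do not anticipate any serious obstacle: the argument is almost entirely linear algebra, and the only mild subtlety is keeping the $\ell^1$-summability under control when multiplying by $\bA$ (handled by $\|\bA\|_1<\infty$) and correctly reducing the vector-valued independence condition to scalar independence coordinate-by-coordinate. One small caveat worth flagging is that the theorem presupposes a definition of ``admissible single-task kernel'' that mirrors Definition~\ref{def:AdmissibleKernel} with $d=1$; this is the natural scalar specialization used in the earlier companion work \cite{SZH13}, and I would state it explicitly at the start of the proof so the equivalences (A1)$\Leftrightarrow$(A1), (A2)$\Leftrightarrow$(A2), (A3)$\Leftrightarrow$(A3) proved above combine to give the claim.
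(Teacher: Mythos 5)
Your proposal is correct and follows essentially the same route as the paper: both verify (A1)--(A3) separately, using $\|\bK(x,x')\|_1=|K(x,x')|\,\|\bA\|_1$ for boundedness, the factorization of $\bK[\bx]$ through $\bA$ (the paper writes it as $\diag(\bA,\dots,\bA)$ times the block matrix with scalar entries, you as the Kronecker product $K[\bx]\otimes\bA$ — the same observation) for non-singularity, and the invertibility of $\bA$ to reduce the vector independence condition to the scalar one coordinate-by-coordinate. Your explicit remark that ``admissible single-task kernel'' means the $d=1$ specialization is a reasonable clarification the paper leaves implicit.
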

\begin{proof} Notice that $\bK^{\top}=\bK$ as $\bA$ is a  symmetric matrix.
We shall verify the non-singularity, boundedness, and independence assumptions. By  (\ref{multitaskkernel}), the non-singularity condition holds by noting that
$$
\bK[\bx]=[K(x_k,x_j)\bA: j,k\in\bN_m]=\diag(\bA,\dots,\bA)[K(x_k,x_j): j,k\in\bN_m].
$$
Observe that $\|\bK(x,x')\|_1=|K(x,x')|\|\bA\|_1$, $x,x'\in X$. As a result, $K$ is bounded on $X\times X$ if and only if $\|\bK(x,x')\|_1$ is.
To prove the independence assumption, we let $x_j\in X$, $j\in\bN$ be distinct points and $\vc=(\vc_j:j\in\bN)\in \ell_d^1(\bN)$. One sees that
$$
\sum_{j\in\bN}\bK(x_j,x)\vc_j=\sum_{j\in\bN}K(x_j,x)\bA \vc_j=\bA\sum_{j\in\bN}K(x_j,x)\vc_j.
$$
As $\bA$ is nonsingular,  $\sum_{j\in\bN}\bK(x_j,x)\vc_j={\bf 0}$ for all $x\in X$ if and only if
$\sum_{j\in\bN}K(x_j,x)(\vc_j)_k=0$ for all $k\in\bN_d$ and all $x\in X$, where $(\vc_j)_k$ denotes the $k$-th component of the column vector $\vc_j$. The proof is complete.
\end{proof}

The above theorem provides a way of constructing admissible multi-task kernels via their single-task counterparts. So far there are two admissible single-task kernels found in the literature. They are the Brownian bridge kernel $K(x,x'):=\min\{x,x'\}-xx'$, $x,x'\in(0,1)$ and the exponential kernel $K(x,x'):=e^{-|x-x'|}$, $x,x'\in\bR$. Here we are able to contribute another one. Specifically, we shall show that the {\it covariance of Brownian motion} (\cite{MortersPeres10}, Subsection 1.4) defined by
\begin{equation}\label{Brownian}
K(x,y):=\min\{x,y\},\ x,y\in (0,1),
\end{equation}
is an admissible single-task kernel. The corresponding RKHS $\cH_K$, also called the Cameron-Martin Hilbert space, consists of continuous functions $f$ on $[0,1]$ such that their distributional derivatives $f'\in L^2([0,1])$ and $f(0) = 0$. The inner product on $\cH_K$ is defined by
$\langle f,g\rangle_{\cH_K}:=\int_0^1 f'(x)g'(x)dx$, where $f,g\in\cH_K$.

To verify the Lebesgue constant condition \eqref{eq:A4} for this kernel, we explore the connection between the Lebesgue constants of kernels $K$ and $\bK$.
\begin{lemma}
Let $\bx=\{x_j\in X: j\in\bN_m\}$ be a set of distinct points, and $\alpha>0$. Then
$$
 \sup_{t\in X}\|K[\bx]^{-1}K_\bx(t)\|_1\le \alpha\mbox{  if and only if  } \sup_{t\in X}\|\bK[\bx]^{-1}\bK_\bx(t)\|_1\le \alpha.
$$
\end{lemma}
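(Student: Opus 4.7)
My plan is to show that in fact the two Lebesgue-type quantities are equal, from which the stated ``if and only if'' is immediate for every $\alpha>0$. The key observation is that, because $\bK(x,x')=K(x,x')\bA$, the scaling by $\bA$ couples only into the $d$-block structure and does not interact with the $m$-sample structure.

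First, I would write the relevant matrices in Kronecker form. A direct inspection of the definitions gives
$$
\bK[\bx]=K[\bx]\otimes\bA,\qquad \bK_{\bx}(t)=K_{\bx}(t)\otimes\bA,
$$
where $K_{\bx}(t):=(K(t,x_j):j\in\bN_m)^{\top}\in\bR^{m}$. For $\bK[\bx]$ this is immediate from the block definition; for $\bK_{\bx}(t)$ one needs the symmetry of $\bA$ (and of $K$) to identify the transposed block $\bK(t,x_j)^{\top}$ with $K(t,x_j)\bA$, so that the $j$-th $d\times d$ block of $\bK_{\bx}(t)$ is exactly $K(t,x_j)\bA$.

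Second, I would apply the standard Kronecker identities $(A\otimes B)^{-1}=A^{-1}\otimes B^{-1}$ and $(A\otimes B)(C\otimes D)=(AC)\otimes(BD)$ to get
$$
\bK[\bx]^{-1}\bK_{\bx}(t)=\bigl(K[\bx]^{-1}\otimes\bA^{-1}\bigr)\bigl(K_{\bx}(t)\otimes\bA\bigr)=\bigl(K[\bx]^{-1}K_{\bx}(t)\bigr)\otimes I_{d}.
$$
Setting $\vu(t):=K[\bx]^{-1}K_{\bx}(t)\in\bR^{m}$, this says that the $md\times d$ matrix $\bK[\bx]^{-1}\bK_{\bx}(t)$ is block-column with $j$-th block equal to $u_{j}(t)\,I_{d}$; in particular $\bA$ has disappeared entirely.

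Third, I would compute the induced $\ell^{1}$ norm of $\vu(t)\otimes I_{d}$ directly. For each $k\in\bN_{d}$, the $k$-th column of $\vu(t)\otimes I_{d}$ has exactly $m$ nonzero entries, namely $u_{1}(t),\dots,u_{m}(t)$ placed in rows $k,k+d,\dots,k+(m-1)d$, so its $\ell^{1}$ sum is $\|\vu(t)\|_{1}$. Taking the maximum over $k$ gives
$$
\|\bK[\bx]^{-1}\bK_{\bx}(t)\|_{1}=\|\vu(t)\|_{1}=\|K[\bx]^{-1}K_{\bx}(t)\|_{1}\quad\text{for every } t\in X.
$$
Taking suprema over $t$ yields equality of the two constants, which establishes the lemma.

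The only real obstacle is book-keeping: one must be careful to match the block ordering implicit in the definition of $\bK_{\bx}(t)$ with the Kronecker convention, and to verify that the transpose appearing in the definition of $\bK_{\bx}(t)$ is absorbed by symmetry of $\bA$. Once this is done, both the Kronecker factorization and the column-$\ell^{1}$ computation are one-liners.
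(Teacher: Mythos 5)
Your proposal is correct and follows essentially the same route as the paper: the paper also factors $\bK[\bx]$ and $\bK_{\bx}(t)$ as the scalar objects tensored with $\bA$ (written there via $\diag(\bA,\dots,\bA)$ rather than Kronecker notation), cancels the $\bA$ factors to reduce $\bK[\bx]^{-1}\bK_{\bx}(t)$ to $K[\bx]^{-1}K_{\bx}(t)\otimes\bI$, and concludes from the definition of the induced $\ell^{1}$ (max column sum) norm. Your version is a little more explicit about the column-sum computation and about why $\bA$'s symmetry absorbs the transpose in $\bK_{\bx}(t)$, and it correctly observes that the two Lebesgue constants are in fact equal, which is slightly stronger than the stated equivalence.
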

\begin{proof} For each $t\in X$, set $K[\bx]^{-1}K_\bx(t):=(b_1(t),b_2(t),\dots, b_d(t))^{\top}$.
By (\ref{multitaskkernel}), we compute
\begin{eqnarray}
  \bK[\bx]^{-1}\bK_\bx(\cdot) & =& \big[K(x_j,x_k)\bA:k,j\in\bN_m\big]^{-1}(K(x_j,\cdot)\bA:j\in\bN_m)^{\top} \nonumber\\
   &=& \big[K(x_j,x_k)\bI:k,j\in\bN_m\big]^{-1}\diag(\bA,\dots,\bA)^{-1}\diag(\bA,\dots,\bA)(K(x_j,\cdot)\bI:j\in\bN_m)^{\top} \nonumber\\
   &=& \big[K(x_j,x_k)\bI:k,j\in\bN_m\big]^{-1}(K(x_j,\cdot)\bI:j\in\bN_m)^{\top}  \label{Lebesgueeq3}
\end{eqnarray}
where $\diag(\bA,\dots,\bA)$ is a block diagonal matrix with $\bA$ as the diagonal entries. By (\ref{Lebesgueeq3}), this leads to
$$
 \sup_{t\in X}\|\bK[\bx]^{-1}\bK_\bx(t)\|_1=\max_{t\in X}\|(b_1(t)\bI,b_2(t)\bI,\dots, b_d(t)\bI)^{\top}\|_1.
$$
The proof is completed by noting the definition of the norm $\|\cdot\|_1$ for a matrices.
\end{proof}

\begin{theorem} The covariance of Brownian motion defined by (\ref{Brownian}) is an admissible single-task kernel and satisfies the Lebesgue constant condition \eqref{eq:A4}.
\end{theorem}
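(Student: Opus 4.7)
My strategy is to verify the three admissibility assumptions separately and then to compute the Lebesgue function explicitly by exploiting the piecewise-linear structure of the span of kernel sections. After relabelling, I may assume the sampling points satisfy $x_1 < x_2 < \cdots < x_m$ and set $x_0 := 0$ for notational convenience.

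Conditions (A1) and (A2) will reduce to short calculations. For (A1) I would exhibit the explicit Cholesky factorization $K[\bx] = LL^{\top}$ with $L_{ij} = \sqrt{x_j - x_{j-1}}$ for $j \le i$ (and zero otherwise), which gives $\det K[\bx] = \prod_{j=1}^{m}(x_j - x_{j-1}) > 0$. Condition (A2) is immediate from $|K(x,y)| = \min(x,y) < 1$ on $(0,1)\times(0,1)$. The delicate item is (A3), because the distinct points $\{x_j\}_{j \in \bN}$ may accumulate anywhere in $(0,1)$, so a naive pointwise-differentiation argument does not apply. I would instead introduce the finite signed measure $\mu := \sum_{j} c_j \delta_{x_j}$ on $(0,1)$ and use Fubini to rewrite $\sum_j c_j \min(x_j, x) = \int_0^x \mu([s,1)) \, ds$. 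If this integral vanishes identically in $x$, Lebesgue differentiation forces the left-continuous function $s \mapsto \mu([s,1))$ to be zero almost everywhere, hence identically zero by left-continuity; reading off the jumps at the atoms $x_k$ then gives $c_k = 0$ for every $k$.

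The main content is the Lebesgue constant condition. The key observation is that every function of the form $f = \sum_{j=1}^m a_j K(x_j,\cdot)$ is piecewise linear on $[0,1]$ with break-points exactly at $\{x_j\}$, vanishes at $0$, and is constant on $[x_m,1]$; conversely, any such broken line determines the coefficients through its slopes $\sigma_j$ on $(x_{j-1}, x_j)$ via $a_j = \sigma_j - \sigma_{j+1}$ (with the convention $\sigma_{m+1}:=0$). The vector $a(t) := K[\bx]^{-1} K_\bx(t)$ represents the unique such interpolant $f_t^*$ taking the prescribed values $\min(t, x_j)$ at the knots, so the problem collapses to reading off the slopes of the broken line through $(0,0)$ and the points $(x_j, \min(t, x_j))$, $j \in \bN_m$. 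A short case analysis according to whether $t$ lies in $(0, x_1]$, in some interior gap $[x_k, x_{k+1}]$ with $1 \le k \le m-1$, or in $[x_m, 1)$ produces in each case at most two nonzero coefficients, all nonnegative and summing to at most $1$; hence $\|a(t)\|_1 \le 1$ uniformly in $t$, which is \eqref{eq:A4}. I expect the main obstacle to be the bookkeeping in (A3) needed to accommodate the possible accumulation of the sampling points; the Lebesgue-constant calculation itself will be routine once the broken-line picture is in place.
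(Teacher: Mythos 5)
Your plan is correct and lands on essentially the same proof as the paper: the Lebesgue constant is handled by the identical three-case analysis ($t<x_1$, $t$ in an interior gap, $t>x_m$) yielding at most two nonnegative coefficients summing to at most $1$. Where you differ is in the supporting steps, and in each instance your route is at least as good. For (A1) the paper computes the determinant of $K[\bx]$ directly (and in fact states it as $x_1(x_2-x_1)(x_3-x_1)\cdots(x_m-x_1)$, which appears to be a typo for $x_1\prod_{j=2}^m(x_j-x_{j-1})$); your Cholesky factorization gives the correct value cleanly and additionally exhibits strict positive definiteness. For (A3) the paper merely records the piecewise form of $K(\cdot,t)$ and asserts the conclusion, whereas your signed-measure/Lebesgue-differentiation argument actually handles the case of accumulating points with $\ell^1$ coefficients, which is the genuinely delicate point; just note that the correct identity is $\sum_j c_j\min(x_j,x)=\int_0^x\mu((s,1))\,ds$, which differs from your $\mu([s,1))$ only on the countable set of atoms, so your left-continuity argument goes through unchanged. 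Finally, your broken-line interpretation of $K[\bx]^{-1}K_\bx(t)$ as the slope-difference vector of the piecewise-linear interpolant is a more conceptual derivation of the same explicit coefficients the paper writes down by inspection; it makes the bound $\|K[\bx]^{-1}K_\bx(t)\|_1\le 1$ transparent rather than a case-by-case verification.
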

\begin{proof} Obviously, $|K(x,y)|$ is bounded by $1$ for all $x,y\in(0,1)$.  Let $m\in\bN$. Without loss of generality, we choose $0<x_1<x_2<\dots<x_m<1$ and let $\bx:=\{x_1,x_2,\dots,x_m\}$.
An easy computation shows that the determinant of the kernel matrix
$$
K[\bx]:=\big[\min\{x_j,x_k\}: j,k\in\bN_m\big]=\left[\begin{array}{cccccc}
x_1&x_1 &x_1&\dots&x_1 &x_1\\
x_1&x_2 &x_2&\dots&x_2 &x_2 \\
x_1&x_2 &x_3&\dots&x_3 &x_3\\
\vdots &\vdots &\vdots&\ddots&\vdots &\vdots \\
x_1&x_2 &x_3&\dots&x_{m-1} &x_{m-1}\\
x_1&x_2 &x_3&\dots&x_{m-1} &x_m\\
\end{array}
\right]
$$
is $x_1(x_2-x_1)(x_3-x_1)\cdots (x_m-x_1)\ne0$, and thus non-singularity assumption ({\bf A1}) holds.
Notice that for any $t\in(0,1)$,
$$
K(x,t):=\min\{x,t\}=\left\{\begin{array}{ll}
x& 0<x\le t,\\
t&t<x<1,
\end{array}
\right.
x\in (0,1).
$$
It follows that $K$ satisfies the independence assumption ({\bf A3}).

There are three cases when we compute the Lebesgue constant:

Case 1: If $0<t<x_1$ then $K_{\bx}(t)=(t,t,\dots,t)^{\top}$ and $K[\bx]^{-1}K_{\bx}(t)=(\frac{t}{x_1},0,\dots,0)^{\top}$.

Case 2: If $x_m<t<1$ then $K_{\bx}(t)=(x_1,x_2,\dots,x_m)^{\top}$ and $K[\bx]^{-1}K_{\bx}(t)=(0,0,\dots,0,1)^{\top}$.

Case 3: If $x_j\le t<x_{j+1}$ for some $j\in\bN_{m-1}$ then $K_{\bx}(t)=(x_1,x_2,\dots,x_j,t,\dots,t)^{\top}$ and
$$
K[\bx]^{-1}K_{\bx}(t)=\Big(0,0,\dots, 0,\frac{x_{j+1}-t}{x_{j+1}-x_j},\frac{t-x_j}{x_{j+1}-x_j},0,\dots,0\Big)^{\top}.
$$
In all three cases, it is straightforward to see that $\max_{t\in(0,1)}\|K[\bx]^{-1}K_{\bx}(t)\|_1\le 1$. Namely, the Lebesgue constant condition \eqref{eq:A4} is satisfied.
The proof is hence complete.
\end{proof}

At the end of this section, we give two negative examples. We shall show that neither the exponential kernel or the Gaussian kernel is admissible when the dimension is higher than $1$. This forces us to look for a relaxed linear representer theorem in the next section.

\begin{theorem} The multivariate exponential kernel $K(x,x')=e^{-\|x-x'\|_1}$, $x,x'\in\bR^n$ does not satisfy the Lebesgue constant condition \eqref{eq:A4} when $n\ge 2$.
\end{theorem}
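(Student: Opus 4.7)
The plan is to exhibit, for every dimension $n \geq 2$, a four-point set $\bx \subset \bR^n$ together with a test point $t \in \bR^n$ at which the ratio $\|K[\bx]^{-1} K_\bx(t)\|_1$ strictly exceeds $1$. The guiding geometric observation is that in the $\ell^1$ geometry of $\bR^2$ one can place four sample points that are simultaneously mutually equidistant and equidistant to a common center, which is impossible on the real line; this is consistent with the univariate exponential kernel being admissible but its multivariate analog failing to be.

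Concretely, I take
$$
x_1 = (1,0,0,\dots,0),\ x_2 = (-1,0,0,\dots,0),\ x_3 = (0,1,0,\dots,0),\ x_4 = (0,-1,0,\dots,0),
$$
set $\bx = \{x_1,x_2,x_3,x_4\}$, and choose test point $t = (0,0,\dots,0) \in \bR^n$. A direct check gives $\|x_j - x_k\|_1 = 2$ for every $j \neq k$ and $\|x_j - t\|_1 = 1$ for every $j$, independently of $n \geq 2$ (the additional zero coordinates contribute nothing). Writing $a := e^{-1}$ and $b := e^{-2}$ and using the all-ones vector $\mathbf{1}_4 = (1,1,1,1)^\top$ with $J_4 := \mathbf{1}_4 \mathbf{1}_4^\top$, this yields
$$
K[\bx] \;=\; (1-b)\, I_4 + b\, J_4, \qquad K_\bx(t) \;=\; a\, \mathbf{1}_4.
$$
Since $\mathbf{1}_4$ is an eigenvector of $K[\bx]$ with eigenvalue $(1-b) + 4b = 1 + 3b$, it is also an eigenvector of $K[\bx]^{-1}$ with eigenvalue $(1+3b)^{-1}$, so
$$
K[\bx]^{-1} K_\bx(t) \;=\; \frac{a}{1+3b}\, \mathbf{1}_4,
$$
and the $\ell^1$-induced matrix norm of this column vector equals $\|K[\bx]^{-1} K_\bx(t)\|_1 = \frac{4a}{1+3b} = \frac{4e^{-1}}{1+3e^{-2}}$.

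The proof is then finished by the elementary inequality $\frac{4e^{-1}}{1+3e^{-2}} > 1$, equivalent to $4e > e^2 + 3$, which is routine (numerically $4e \approx 10.87$ versus $e^2 + 3 \approx 10.39$, and algebraically $e(4-e) > 3$ since $e \approx 2.718$ and $4-e \approx 1.282$). There is essentially no serious obstacle: once the symmetric ``cross'' configuration is chosen, the kernel matrix becomes a rank-one perturbation of a scalar multiple of the identity and the computation collapses to a single eigenvalue. The only subtlety worth highlighting is the role of dimension: the four sample points are a genuinely two-dimensional object in the $\ell^1$ metric, which is why the same construction cannot work for $n=1$ (and indeed the scalar exponential kernel on $\bR$ is known to satisfy the Lebesgue constant condition).
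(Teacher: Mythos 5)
Your proof is correct and takes essentially the same approach as the paper's: both exhibit an explicit point configuration and a test point at which the Lebesgue function exceeds $1$ (the paper uses the three points $(0,0)$, $(1/2,0)$, $(0,1/2)$ with test point $(1/2,1/2)$, giving the value $e^{-1}+2e^{-1/2}>1$, and embeds this into $\bR^n$ for $n\ge 3$). Your symmetric four-point cross with the origin as test point is a different but equally valid witness whose eigenvector computation avoids inverting $K[\bx]$ altogether, and your final inequality $4e>e^2+3$ is airtight since it factors as $(e-1)(3-e)>0$.
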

\begin{proof} We begin with the proof of the special case when $n=2$. Choose three distinct points $x_1=(0,0)^{\top}$, $x_2=(1/2,0)^{\top}$, $x_3=(0,1/2)^{\top}$ in $\bR^2$. Let $\bx:=\{x_1,x_2,x_3\}$. Then we estimate the Lebesgue constant of bivariate exponential kernel
$$
\begin{array}{ll}
\displaystyle{\supp_{t\in\bR^2}\big\|K[\bx]^{-1}K_\bx(t)\big\|_1}
&\displaystyle{\ge \Big\|K[\bx]^{-1}K_\bx((1/2,1/2)^{\top})\Big\|_1 }\\
&\displaystyle{=\left\|\left[\begin{array}{ccc}1&e^{-\frac12}&e^{-\frac12}\\ e^{-\frac12}&1&e^{-1}\\e^{-\frac12}&e^{-1}&1 \end{array}\right]^{-1}
\left[\begin{array}{c}e^{-1}\\ e^{-\frac12}\\ e^{-\frac12}\end{array}\right]\right\|_1  }\\
&\displaystyle{=\left\| \frac{1}{1-e^{-1}}\left[\begin{array}{ccc}
1+e^{-1}&-e^{-\frac12}&-e^{-\frac12}\\
-e^{-\frac12}&1&0\\
-e^{-\frac12}&0&1 \end{array}\right]
\left[\begin{array}{c}
e^{-1}\\ e^{-\frac12}\\
e^{-\frac12}\end{array}
\right]\right\|_1  }\\
&\displaystyle{=\|(-e^{-1}, e^{-\frac12}, e^{-\frac12})^{\top}\|_1 =e^{-1}+2e^{-\frac12}>1.}
\end{array}
$$
In general, for any $n\ge3$, we choose $n+1$ points, $x_1={\bf 0}$, $x_{l+1}=\ve_l/2$ in $\bR^n$ for all $l\in\bN_n$. Here $\ve_l$ is a column vector in $\bR^n$ whose $l$-th component is $1$ and other components are $0$. Let $\bx:=\{x_1,x_2,\dots,x_{n+1}\}$. Then we compute
$$
\begin{array}{ll}
\displaystyle{\supp_{t\in\bR^n}\big\|K[\bx]^{-1}K_\bx(t)\big\|_1}
&\displaystyle{\ge \Big\|K[\bx]^{-1}K_\bx\Big(\frac{\ve_1+\ve_2}{2}\Big)\Big\|_1}\\
&\displaystyle{=\big\|(-e^{-1},e^{-\frac12},e^{-\frac12},0,\dots,0)^{\top}\big\|_1=e^{-1}+2e^{-\frac12}>1.}
\end{array}
$$
In other words, the vector $K_\bx(\frac{\ve_1+\ve_2}{2})$ can be exactly represented by the first three columns of $K[\bx]$. The proof is hence complete.
\end{proof}

\begin{theorem} The Gaussian kernel $K(x,x')=e^{-\|x-x'\|_2^2}$, $x,x'\in\bR^n$ does not satisfy the Lebesgue constant condition \eqref{eq:A4} for any $n\ge 1$.
\end{theorem}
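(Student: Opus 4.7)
The proof will follow the same strategy used in the previous theorem for the exponential kernel: exhibit an explicit configuration of pairwise distinct sampling points and a query point $t$ at which $\|K[\bx]^{-1}K_{\bx}(t)\|_1>1$. The plan is to settle $n=1$ with a carefully chosen two-point symmetric set, then bootstrap to arbitrary $n\ge 1$ via the product structure of the Gaussian.

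For $n=1$, I will take $\bx=\{-a,a\}$ for a parameter $a>0$ to be chosen small, and evaluate at $t=0$. The symmetry makes all computations explicit: $K_\bx(0)=(e^{-a^2},e^{-a^2})^{\top}$ and
$$
K[\bx]=\begin{pmatrix}1 & e^{-4a^2}\\ e^{-4a^2} & 1\end{pmatrix},\qquad K[\bx]^{-1}=\frac{1}{1-e^{-8a^2}}\begin{pmatrix}1 & -e^{-4a^2}\\ -e^{-4a^2} & 1\end{pmatrix}.
$$
A direct calculation then yields
$$
K[\bx]^{-1}K_\bx(0)=\frac{e^{-a^2}}{1+e^{-4a^2}}\,(1,1)^{\top},\qquad \bigl\|K[\bx]^{-1}K_\bx(0)\bigr\|_1=\frac{2e^{-a^2}}{1+e^{-4a^2}}.
$$

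The next step is to verify that this quantity exceeds $1$ for some $a>0$. Setting $u=e^{-a^2}\in(0,1]$, the inequality becomes $2u-u^4-1>0$, which I plan to handle either by the factorization $u^4-2u+1=(u-1)(u^3+u^2+u-1)$ (so the expression is positive on the interval between $1$ and the unique root of $u^3+u^2+u-1$ in $(0,1)$), or more transparently by the Taylor expansion $2e^{-a^2}-e^{-4a^2}=1+2a^2-7a^4+O(a^6)$, which is strictly greater than $1$ for all sufficiently small $a>0$. This settles the $n=1$ case.

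For $n\ge 2$, I will embed the previous construction along the first coordinate axis, taking $x_1=-a\,\ve_1$, $x_2=a\,\ve_1$ and $t=\vzero\in\bR^n$. Because the Gaussian kernel factorizes as $e^{-\|x-x'\|_2^2}=\prod_{i=1}^n e^{-(x_i-x'_i)^2}$, and the second through $n$-th coordinates of $x_1,x_2,t$ all vanish, the extra factors equal $1$. Consequently $K[\bx]$ and $K_\bx(0)$ in $\bR^n$ coincide numerically with their $n=1$ counterparts, so the same lower bound $\frac{2e^{-a^2}}{1+e^{-4a^2}}>1$ transfers verbatim. The hardest part is really just locating the right configuration; once the two-point symmetric choice is made, every remaining step is a routine algebraic check, and the factorization/Taylor argument cleanly certifies the crucial strict inequality.
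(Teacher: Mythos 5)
Your proof is correct and follows essentially the same strategy as the paper: exhibit an explicit point configuration whose Lebesgue function exceeds $1$. The only difference is cosmetic — the paper uses the fixed points $\{0,1/2\}$ with $t=1$ for $n=1$ and $n+1$ points $\{\vzero,\ve_1/2,\dots,\ve_n/2\}$ with $t=(\ve_1+\ve_2)/2$ for $n\ge 2$, whereas your symmetric pair $\{-a\ve_1,a\ve_1\}$ with $t=\vzero$ and small $a$ handles all dimensions uniformly and is, if anything, slightly cleaner.
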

\begin{proof} When $n=1$, we choose two points $x_1=0$ and $x_2=1/2$ in $\bR$.  Let $\bx:=\{x_1,x_2\}$. Then we compute the Lebesgue constant of the Gaussian kernel on $\bR$
$$
\supp_{t\in\bR}\big\|K[\bx]^{-1}K_\bx(t)\big\|_1\ge  \|K[\bx]^{-1}K_\bx(1)\|_1=\|(-e^{-\frac12},e^{-\frac14}+e^{-\frac34})^{\top}\|_1=e^{-\frac12}+e^{-\frac14}+e^{-\frac34}>1.
$$
Generally, for any $n\ge2$, we choose $n+1$ points, $x_1={\bf 0}$, $x_{l+1}=\ve_l/2$ in $\bR^n$ for all $l\in\bN_n$. Let $\bx:=\{x_1,x_2,\dots,x_{n+1}\}$. Then we compute
$$
\begin{array}{ll}
\displaystyle{\supp_{t\in\bR^n}\big\|K[\bx]^{-1}K_\bx(t)\big\|_1}
&\displaystyle{\ge \Big\|K[\bx]^{-1}K_\bx\Big(\frac{\ve_1+\ve_2}{2}\Big)\Big\|_1}\\
&\displaystyle{=\big\|(-e^{-\frac12},e^{-\frac14},e^{-\frac14},0,\dots,0)^{\top}\big\|_1=e^{-\frac12}+2e^{-\frac14}>1.}
\end{array}
$$
In other words, the vector $K_\bx(\frac{\ve_1+\ve_2}{2})$ can be exactly represented by the first three columns of $K[\bx]$. The proof is hence complete.
\end{proof}

We remark that the Lebesgue constant for the kernel interpolation always satisfies
$$
\supp_{t\in X}\big\|K[\bx]^{-1}K_\bx(t)\big\|_1\ge \supp_{t\in \bx}\big\|K[\bx]^{-1}K_\bx(t)\big\|_1 =1.
$$
Therefore, asking it to be exactly bounded below by $1$ is a very strong condition and only a few kernels satisfy it.
To address this problem, we are devoted to investigating a relaxed version of the representer theorem and the Lebesgue constant condition in the next section.

\section{A Relaxed Representer Theorem}\label{sec:relaxed}
As shown in Section \ref{sec:examples}, the Lebesgue constant condition \eqref{eq:A4} is a very strong condition and only a few commonly used kernels satisfy it. We will derive in this section relaxed representer theorems that need a weaker condition on the Lebesgue constant. We will also present a rich class of kernels that satisfy this weaker condition.

In particular, we will consider the {\it relaxed linear representer theorem} in the following form
\begin{equation}\label{remedyrepresenter}
\min_{f\in \cS^{\bx}}L(f(\bx),\by)+\lambda \|f\|_{\cB_\bK}\le \min_{f\in\cB_\bK} L(f(\bx),\by)+\lambda\beta_m \|f\|_{\cB_\bK},
\end{equation}
where $\beta_m\ge1$ is a constant depending on the number $m$ of sampling points, the kernel $\bK$ and the input space $X$.

We point out that if we require $\beta_m=1$ for all $m$, then it is exactly the same as the Lebesgue constant condition \eqref{eq:A4}. Once allowing $\beta_m>1$, there would be a large class of kernels included in our framework. On the other hand side, as long as $\beta_m$ is bounded on $m$ or does not increase too fast with respect to $m$, we will still get a reasonable learning rate of the regularization networks model in machine learning \cite{SongZhang}.

We shall prove a weaker condition on the Lebesgue constant for the relaxed linear representer theorem \eqref{remedyrepresenter}. To this end, we first show a connection between the relaxed linear representer theorem for regularization networks and that for the minimal norm interpolation problem.
\begin{lemma}
If there exists some $\beta_m \geq 1$ such that for all $\vy\in\bR^{md}$
\begin{equation}\label{remedysufficientcond}
\min_{f\in\cI_\bx(\by)}\|f\|_{\cB_\bK}\ge \frac1{\beta_m}\min_{\cI_\bx(\by)\cap \cS^\bx}\|f\|_{\cB_\bK}
\end{equation}
then the relaxed linear representer theorem (\ref{remedyrepresenter}) holds true for any continuous loss function $V$ and any regularization parameter $\lambda$.
\end{lemma}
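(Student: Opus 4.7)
The plan is to show that any candidate $f\in\cB_\bK$ on the right-hand side of \eqref{remedyrepresenter} can be replaced by an element $f_0\in\cS^\bx$ that preserves the data-fitting value $L(f(\bx),\vy)$ and inflates the regularization term by at most a factor of $\beta_m$, after which the inequality between the two minima follows by taking infima.

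First I would fix an arbitrary $f\in\cB_\bK$ and consider the minimal norm interpolation problem with sample values $\tilde\vy:=f(\bx)\in\bR^{md}$. By admissibility of $\bK$ (assumption \textbf{(A1)}), the kernel matrix $\bK[\bx]$ is invertible, so the intersection $\cI_\bx(\tilde\vy)\cap \cS^\bx$ contains the unique element $f_0(\cdot):=\bK^\bx(\cdot)\bK[\bx]^{-1}\tilde\vy$. In particular $\min_{g\in\cI_\bx(\tilde\vy)\cap\cS^\bx}\|g\|_{\cB_\bK}=\|f_0\|_{\cB_\bK}$. Since $f\in\cI_\bx(\tilde\vy)$, the hypothesis \eqref{remedysufficientcond} applied with $\vy$ replaced by $\tilde\vy$ gives
$$
\|f\|_{\cB_\bK}\ \ge\ \min_{g\in\cI_\bx(\tilde\vy)}\|g\|_{\cB_\bK}\ \ge\ \frac{1}{\beta_m}\|f_0\|_{\cB_\bK},
$$
so $\|f_0\|_{\cB_\bK}\le \beta_m\|f\|_{\cB_\bK}$.

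Next, because $f_0(\bx)=\tilde\vy=f(\bx)$, we trivially have $L(f_0(\bx),\vy)=L(f(\bx),\vy)$. Combining these two facts,
$$
L(f_0(\bx),\vy)+\lambda\|f_0\|_{\cB_\bK}\ \le\ L(f(\bx),\vy)+\lambda\beta_m\|f\|_{\cB_\bK}.
$$
Since $f_0\in\cS^\bx$, the left-hand side is bounded below by $\min_{h\in\cS^\bx} L(h(\bx),\vy)+\lambda\|h\|_{\cB_\bK}$, and this bound holds for every $f\in\cB_\bK$. Taking the infimum of the right-hand side over $f\in\cB_\bK$ produces \eqref{remedyrepresenter}.

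There is essentially no hard step here; the only point requiring care is recognizing that $\cI_\bx(\tilde\vy)\cap \cS^\bx$ is a singleton (so its ``minimum norm'' element is the unique interpolant), which is exactly where admissibility assumption \textbf{(A1)} is used. The continuity of $L$ and the form of $\phi$ play no role in the argument because the loss value is preserved exactly when one replaces $f$ by the interpolant $f_0$; that is the mechanism that lets the single hypothesis \eqref{remedysufficientcond} deliver the inequality \eqref{remedyrepresenter} for \emph{every} continuous loss $L$ and every $\lambda>0$.
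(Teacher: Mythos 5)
Your proof is correct and follows essentially the same route as the paper: replace an arbitrary competitor $f$ (the paper uses a minimizer of the right-hand problem) by the unique interpolant $f_0\in\cS^\bx$ with $f_0(\bx)=f(\bx)$, use the hypothesis \eqref{remedysufficientcond} at the data values $f(\bx)$ to get $\|f_0\|_{\cB_\bK}\le\beta_m\|f\|_{\cB_\bK}$, and observe that the loss term is unchanged. Your version of taking infima over all $f$ is a harmless variant of the paper's argument, which starts from an assumed minimizer.
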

\begin{proof}
Suppose $f_0$ is a minimizer of $\min_{f\in\cB_\bK}L(f(\bx),\by)+\lambda\beta_m \|f\|_{\cB_\bK}$. Let $g$ be the unique function in $\cS^\bx$ that interpolates $f_0$ at $\bx$, namely, $g(\bx)=f_0(\bx)$. By (\ref{remedysufficientcond}),
$\|g\|_{\cB_\bK}\le \beta_m\|f_0\|_{\cB_\bK}$. It implies
$$
L(g(\bx),\by)+\lambda\|g\|_{\cB_\bK}\le L(f_0(\bx),\by)+\lambda\beta_m\|f_0\|_{\cB_\bK},
$$
which finishes the proof.
\end{proof}

The next result gives a characterization for condition \eqref{remedysufficientcond}. It is a weaker version of the Lebesgue constant condition \eqref{eq:A4}.
\begin{theorem}
Equation (\ref{remedysufficientcond}) holds true for all $\vy\in\bR^{md}$ if and only if
\begin{equation}\label{relaxation}
\sup_{t\in X}\|\bK[\bx]^{-1}\bK_\bx(t)\|_1\le \beta_m.
\end{equation}
\end{theorem}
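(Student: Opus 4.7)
The plan is to refine the argument of Theorem~\ref{thm:repinterp} to accommodate the relaxation factor $\beta_m$. Write $g_\vy := \bK^{\bx}(\cdot)\bK[\bx]^{-1}\vy$ for the unique element of $\cI_{\bx}(\vy)\cap\cS^{\bx}$; it has $\cB_\bK$-norm $\|\bK[\bx]^{-1}\vy\|_1$. The core observation is a finite-expansion identity: if $f=\bK^{\bt}(\cdot)\tilde{\vc}\in\cB_0$ where $\bt=(t_1,\dots,t_n)$ consists of pairwise distinct points containing $\bx$, then $\|f\|_{\cB_\bK}=\|\tilde{\vc}\|_1$ and $f(\bx)=\bK[\bx,\bt]\tilde{\vc}$, where $\bK[\bx,\bt]$ is the $md\times nd$ block matrix with $(i,j)$-block $\bK(x_i,t_j)$. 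The symmetry $\bK^{\top}=\bK$ identifies the $j$-th block-column of $\bK[\bx,\bt]$ with $\bK_{\bx}(t_j)$, so $\bK[\bx]^{-1}\bK[\bx,\bt]=(\bK[\bx]^{-1}\bK_{\bx}(t_j):j\in\bN_n)$; since the $\ell^1$-induced matrix norm equals the maximum $\ell^1$-norm of its columns,
$$\|\bK[\bx]^{-1}\bK[\bx,\bt]\|_1=\max_{j\in\bN_n}\|\bK[\bx]^{-1}\bK_{\bx}(t_j)\|_1.$$

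For the sufficiency direction, I would assume \eqref{relaxation} and combine this identity with sub-multiplicativity of $\|\cdot\|_1$: for any $f\in\cI_{\bx}(\vy)\cap\cB_0$,
$\|g_\vy\|_{\cB_\bK}=\|\bK[\bx]^{-1}\bK[\bx,\bt]\tilde{\vc}\|_1\le \beta_m\|\tilde{\vc}\|_1=\beta_m\|f\|_{\cB_\bK}$.
Extension to arbitrary $f\in\cI_{\bx}(\vy)$ then repeats verbatim the density/continuity argument at the end of the proof of Theorem~\ref{thm:repinterp}: approximate $f$ by a sequence $f_k\in\cB_0$ in $\cB_\bK$-norm, apply the previous step to $f_k$ and its $\cS^{\bx}$-interpolant $g_{f_k(\bx)}$ of $f_k(\bx)$, and use continuity of point evaluation on $\cB_\bK$ together with continuity of the linear map $\vy\mapsto g_\vy$ to let $k\to\infty$. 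Taking the minimum over $f$ yields \eqref{remedysufficientcond}.

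For the necessity direction, suppose \eqref{relaxation} fails, i.e., $\|\bK[\bx]^{-1}\bK_{\bx}(t^*)\|_1>\beta_m$ for some $t^*\in X$. Since $\|\bK[\bx]^{-1}\bK_{\bx}(x_j)\|_1=1\le\beta_m$ for $x_j\in\bx$, we have $t^*\notin\bx$, and by definition of the induced $\ell^1$-norm there exists $j\in\bN_d$ with $\|\bK[\bx]^{-1}\bK_{\bx}(t^*)\ve_j\|_1>\beta_m$. Set $\vy:=\bK_{\bx}(t^*)\ve_j$; the single-term function $f^*:=\bK(t^*,\cdot)\ve_j$ lies in $\cB_\bK$ and satisfies $f^*(\bx)=\bK_\bx(t^*)\ve_j=\vy$ and $\|f^*\|_{\cB_\bK}=\|\ve_j\|_1=1$, while $\|g_\vy\|_{\cB_\bK}=\|\bK[\bx]^{-1}\bK_{\bx}(t^*)\ve_j\|_1>\beta_m$. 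Hence $\min_{f\in\cI_{\bx}(\vy)}\|f\|_{\cB_\bK}\le 1 < \|g_\vy\|_{\cB_\bK}/\beta_m$, contradicting \eqref{remedysufficientcond}. The main obstacle is the matrix-norm identification for the block-concatenated $\bK[\bx]^{-1}\bK[\bx,\bt]$, which depends critically on the symmetry of $\bK$; once that identity is in place the rest is routine algebra plus the density/continuity scaffolding already developed in Theorem~\ref{thm:repinterp}.
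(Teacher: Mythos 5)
Your proposal is correct, and while the necessity half coincides with the paper's (the paper's choice $\tilde{\vb}=\ve_j$, $\vy=\bK_\bx(t_1)\ve_j$ produces exactly your witness $\bK(t^*,\cdot)\ve_j$ of unit norm, just described through the block-inverse parametrization rather than directly), your sufficiency half takes a genuinely different and shorter route. The paper parametrizes a competitor $g\in\cI_\bx(\vy)\cap\cS^{\bx\cup\bt}$ by its values $\bb=g(\bt)$ at the extra nodes, inverts the bordered kernel matrix via the Schur complement to recover the coefficient vector, and then runs a two-case triangle-inequality estimate depending on whether $\|\bK[\bx]^{-1}\by\|_1\le\beta_m\|\tilde{\bb}\|_1$. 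You instead parametrize $g$ directly by its coefficient vector $\tilde{\vc}$ (which is the natural coordinate for the $\cB_\bK$-norm) and obtain $\|\bK[\bx]^{-1}g(\bx)\|_1\le\bigl(\max_j\|\bK[\bx]^{-1}\bK_\bx(t_j)\|_1\bigr)\|\tilde{\vc}\|_1\le\beta_m\|g\|_{\cB_\bK}$ in one line; this avoids both the case analysis and any appeal to nonsingularity of the Schur complement, and the density step is identical in both treatments. One small correction: the matrix sending $\tilde{\vc}$ to $g(\bx)$ has $(i,j)$-block $\bK(t_j,x_i)$, not $\bK(x_i,t_j)$, so its $j$-th block-column equals $\bK_\bx(t_j)$ directly from the definition of $\bK_\bx$ together with the value-symmetry $\bK(t,x)^{\top}=\bK(t,x)$ assumed of admissible kernels; no argument-symmetry $\bK(x,x')=\bK(x',x)$ is needed, and the paper deliberately does not assume it. With that indexing fixed, your column-norm identification and the rest of the argument go through as written.
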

\begin{proof} Remember that the set $\cI_{\bx}(\by)\cap\cS^{\bx}$ consists of only one function $f_0:=\bK^{\bx}(\cdot)\bK[\bx]^{-1}\by$.
Let $g$ be an arbitrary function in $\cI_\bx(\by)\cap\cB_0$, where $\cB_0$ is defined in (\ref{B0}).
By adding sampling points and assigning the corresponding coefficients to be zero if necessary, we may assume $g\in \cS^{\bx\cup\bt}\cap \cI_\bx(\by)$ for a set of new points $\bt:=\{t_k\in X: k\in\bN_n\}$ disjoint with $\bx$.
Let $\bb:=g(\bt)$, and denote by $\bK[\bt,\bx]$ and $\bK[\bx,\bt]$ the $md\times nd$ and $nd\times md$ matrices given by
$$
(\bK[\bt,\bx])_{jk}:=\bK(t_k,x_j),\ \ j\in\bN_m,k\in\bN_n, \quad \mbox{and}\quad (\bK[\bx,\bt])_{jk}:=\bK(x_k,t_j):\ \ j\in\bN_n,k\in\bN_m.
$$
It then follows
\begin{equation}\label{remedythmeq1}
\|g\|_{\cB_\bK}=\left\|\left[\begin{array}{cc}
\bK[\bx]&\bK[\bt,\bx]\\
\bK[\bx,\bt]&\bK[\bt]
\end{array}\right]^{-1}
\left[\begin{array}{c}
\by\\ \bb
\end{array}\right]\right\|_1=\left\|\left[
\begin{array}{c}
\bK[\bx]^{-1}\by-\bK[\bx]^{-1}\bK[\bt,\bx]\tilde{\bb}\\
\tilde{\bb}
\end{array}\right]\right\|_1,
\end{equation}
where
$$
\tilde{\bb}:=\Big(\bK[\bt]-\bK[\bx,\bt]\bK[\bx]^{-1}\bK[\bt,\bx]\Big)^{-1}(\bb-\bK[\bx,\bt]\bK[\bx]^{-1}\by).
$$
Note that as $\bb$ is allowed to take any vector in $\bR^{nd}$, so is $\tilde{\bb}$.

If (\ref{remedysufficientcond}) holds true for all $\by\in\bR^{md}$ then we choose $\bt$ to be a singleton $\{t_1\}$, $\tilde{\vb}=\ve_j$, and $\by=\bK[t_1,\bx]\ve_j=\bK_\bx(t_1)\ve_j$ for some $j\in\bN_d$. It follows
$$
1=\left\|\left[
\begin{array}{c}
{\bf 0}_{md}\\
\ve_j
\end{array}\right]\right\|_1
\ge\frac1{\beta_m}\|f_0\|_{\cB_\bK}=\frac1{\beta_m}\left\|\bK[\bx]^{-1}\by\right\|_1
=\frac1{\beta_m}\left\|\bK[\bx]^{-1}\bK_\bx(t_1)\ve_j\right\|_1.
$$
As $j\in\bN_d$ is arbitrary,  we get (\ref{relaxation}).

Conversely, suppose that (\ref{relaxation}) is satisfied. We need to show that for all $g\in\cI_\bx(\by)$
$$
\|g\|_{\cB_\bK}\ge \frac{1}{\beta_m}\|f_0\|_{\cB_\bK}=\frac1{\beta_m}\left\|\bK[\bx]^{-1}\by\right\|_1.
$$
We shall discuss the case when $g\in\cI_\bx(\by)\cap\cB_0$ only, as the general case will then follow from the same arguments as those in the last paragraph of the proof of Theorem \ref{thm:repinterp}. Let $g\in\cI_\bx(\by)\cap\cB_0$ with the norm in Equation (\ref{remedythmeq1}). If $\|\bK[\bx]^{-1}\by\|_1\le\beta_m\|\tilde{\bb}\|_1$, it is direct to observe that
$$
\|g\|_{\cB_\bK}\ge \|\tilde{\bb}\|_1\ge \frac1{\beta_m}\left\|\bK[\bx]^{-1}\by\right\|_1.
$$
On the other hand, if $\|\bK[\bx]^{-1}\by\|_1\ge\beta_m\|\tilde{\bb}\|_1$, then by  (\ref{relaxation}) we have
$$
\begin{array}{rl}
\|g\|_{\cB_\bK}&\displaystyle{\ge \|\bK[\bx]^{-1}\by\|_1-\|\bK[\bx]^{-1}\bK[\bt,\bx]\tilde{\bb}\|_1+\|\tilde{\bb}\|_1}\\
&\displaystyle{\ge \|\bK[\bx]^{-1}\by\|_1-\left(\max_{k\in\bN_n}\|\bK[\bx]^{-1}\bK_\bx(t_k)\|_1\right)\|\tilde{\bb}\|_1+\|\tilde{\bb}\|_1}\\
&\displaystyle{\ge \|\bK[\bx]^{-1}\by\|_1-(\beta_m-1)\|\tilde{\bb}\|_1 }\\
&\displaystyle{> \|\bK[\bx]^{-1}\by\|_1-(\beta_m-1)\frac1{\beta_m}\|\bK[\bx]^{-1}\by\|_1}\\
&\displaystyle{=\frac1{\beta_m}\|\bK[\bx]^{-1}\by\|_1}.
\end{array}
$$
The proof is hence complete.
\end{proof}

In the rest of this section, we discuss examples of admissible kernels that satisfy the weaker Lebesgue constant condition \eqref{relaxation}. The Lebesgue constants can measure the stability of kernel-based interpolation. Toward this research interest, it was proved in \cite{Hangelbroek2010}  that the Lebesgue constant for the reproducing kernel of Sobolev space on a compact domain is uniformly bounded for quasi-uniform input points (see, Theorem 4.6 therein). For translation invariant kernels $K(x,x')=\phi(x-x')$, $x,x'\in\bR^n$, the paper \cite{DeMarchi2010} showed that if the Fourier transform $\hat{\phi}$ of $\phi$ satisfies
\begin{equation}\label{Fourierdecay}
c_1(1+\|\xi\|_2^2)^{-\tau}\le \hat{\phi}(\xi)\le c_2(1+\|\xi\|_2^2)^{-\tau},\ \|\xi\|>M
\end{equation}
for some positive constants $c_1,c_2,M$ and $\tau$, the Lebesgue constant for quasi-uniform inputs is bounded by a multiple of $\sqrt{m}$.
This includes, for example, Poisson radial functions, Mat\'{e}rn kernels  and Wendland's compactly supported kernels \cite{DeMarchi2010,Wendland2005}.
In particular, multivariate exponential kernels $e^{-\|x-x'\|_1/r}$, $x,x'\in\bR^n$ satisfy (\ref{Fourierdecay}), where $r>0$.

\section{Numerical Experiments}

We shall perform numerical experiments to show that the regularization network (\ref{MinimizationBK1}) in vector-valued RKBSs (VVRKBS) with the $\ell^1$ norm is indeed able to yield sparsity compared to the one in vector-valued RKHSs (VVRKHS).
Moreover, we can achieve better numerical performance for multi-task learning in the constructed spaces.

Here and subsequently,  the multi-variate exponential kernel takes the form
$$
\bK(x,x'):=K_{\exp}(x,x')\bA=e^{-\|x-x'\|_1/r}\bA,\ x,x'\in \bR^n,
$$
where $r>0$ and $\bA$ denotes a $d\times d$ positive definite symmetric matrix. Let $\cB_{\bK}$ be the associated vector-valued RKBS  with the $\ell^1$ norm and $\cH_{\bK}$ the vector-valued RKHS with reproducing kernel $\bK$. For the sake of simplicity,  the square loss function will be used. We compare the following regularization network models
$$
\min_{f\in\cB_\bK}\|f(\bx)-\by\|_2^2+\lambda \|f\|_{\cB_\bK}
$$
and
$$
\min_{f\in\cB_\bK}\|f(\bx)-\by\|_2^2+\lambda \|f\|^2_{\cH_\bK}.
$$
By the relaxed linear representer theorem for $\cB_\bK$ and the linear representer theorem for $\cH_\bK$, the minimizers of the previous models are
$$
\bK^\bx(\cdot){\bf b}=\bA\sum_{j=1}^m e^{-\|x_j-\cdot\|_1/r}{\bf b}_j \mbox{ with }{\bf b}:=\arg\min_{\bc\in\bR^{md}}\Big\{\|\bK[\bx]\bc-\by\|_2^2+\lambda \|\bc\|_1\Big\}
$$
and
$$
\bK^\bx(\cdot){\bf h}=\bA\sum_{j=1}^m e^{-\|x_j-\cdot\|_1/r}{\bf h}_j\mbox{ with }{\bf h}:=\arg\min_{\bc\in\bR^{md}}\Big\{\|\bK[\bx]\bc-\by\|_2^2+\lambda \bc^{\top} \bK[\bx]\bc\Big\},
$$
respectively.  The $\ell^1$-regularized least square regression problem about ${\bf b}$ does not have a closed form solution. We employ the alternating direction method of multipliers (ADMM) \cite{Boyd2011} to solve it. 
The coefficient vector ${\bf h}$ has the closed form ${\bf h}=(\bK[\bx]+\lambda I_{md})^{-1}\by$.
We run all the experiments on a computer with a single NVIDIA Quadro P2000.

The first numerical experiment is for synthetic data. In this experiment, we set $r:=1$.
The training data is generated by a function $f:\bR^2\to\bR^3$ defined as
$$
f(x):=\bA\big(e^{-\|x-(1,1)\|_1}c_1+e^{-\|x-(0.5,0.5)\|_1}c_2+e^{-\|x\|_1}c_3+e^{-\|x+(0.8,0.8)\|_1}c_4+e^{-\|x+(1,1)\|_1}c_5\big), x\in\bR^2,
$$
where
$$
[c_1,c_2,c_3,c_4,c_5]:=\left[\begin{array}{ccccc}
1 & 1& 1& 1& 1\\
1 & 1/2 & 1 &1/2 &1\\
1/2&1 & 1 &1 & 1/2\\
\end{array}
\right]
\mbox{ and }
\bA:=\left[\begin{array}{ccc}
1&e^{-1}&e^{-2}\\
e^{-1}&1&e^{-1}\\
e^{-2}&e^{-1}&1
\end{array}
\right].
$$
Let $\bx:=\{(0.1 i, 0.1j): -10\le i,j\le 10\}$ be the set of $441$ grid points in $[-1,1]^2$, and use the output vector $\by:=(f(x_1),\dots,f(x_{441}))^{\top}\in\bR^{1323}$ at $\bx$ which are then disturbed by some noise. The regularization parameter $\lambda$ for each model will be optimally chosen from $\{10^j: j=-5,-4,\dots,1\}$ so that the mean square error (MSE) between predicted values and $\by$ will be minimized. We then compare the performance measured by the MSE and the sparsity for two regularization models. The sparsity is measured by the number of nonzero components in the coefficient vectors ${\bf b}$ and ${\bf h}$. We test both models with two types of noise: Gaussian noise with variance $0.01$, and uniform noise in $[-0.1,0.1]$. For each type of noise, we run 50 times of numerical experiments and compute the average MSE, the average sparsity, and the maximum sparsity in the 50 experiments. We conclude that
the regularization network in the vector-valued RKBS with the $\ell^1$ norm outperforms the classical one for synthetic data. At the same time, the sparsity of data representation can be substantially promoted in our constructed space. The results are listed in Table \ref{Tab1}.

\begin{table}[htbp]
\centering
\caption{Comparison of the least square regularization for synthetic data in vector-valued RKHS and in vector-valued RKHS with the $\ell^1$ norm.} \label{Tab1}
\begin{tabular}{|c|cc|cc|} \hline
&Gaussian noise & &Uniform noise&  \\ \hline
&MSE&Sparsity (Max) &MSE& Sparsity (Max) \\ \hline
VVRKHS&0.0032&1323 (1323)   &0.0022&1323 (1323) \\ \hline
VVRKBS&0.0016&47.6 (57) &0.0010& 67.7 (93) \\ \hline
\end{tabular}
\end{table}

The second experiment is for the MNIST database (\url{http://yann.lecun.com/exdb/mnist/}) of handwritten digits from machine learning repository. It possesses a training set of 7291 examples, and a test set of 2007 examples. Each digit is a vector in $[0,1]^{255}$. Limited by the computation resource, we only choose three digits $6,8$, and $9$. Then we have a set $\bx$ of 1850 examples for training, and a set of 513 examples for testing.  For the multi-task learning, labels $6,8$ and $9$ are transferred to the vectors
$(1,0,0)$, $(0,1,0)$, and $(0,0,1)$, respectively. For a set  $\bz\subseteq\bx$ of 60 randomly chosen examples, $\{\|x-x'\|_1: x,x'\in \bz\}$ has mean $72.50$ and standard deviation $20.95$. Therefore, we choose the variance $r:=70$ in the second experiment.

We compute the prediction accuracy for training data and the sparsity of coefficients for both models.
Then we apply learned coefficients of both models to testing data. The accuracy is measured by labels that are correctly predicted by models.
The results are listed in Table \ref{Tab2}. To be more specific, we pick out the digits from the testing data that are misclassified by models.
We number the testing data with numbers from $1$ to $513$. The numbers of $8$ misclassified digits, predicted labels, and true labels for each model are listed in Table \ref{Tab3}. Both regularization models classify the numbers $56$, $58$, $73$, $113$, $212$, $430$, and $480$ incorrectly.
But the number $64$ is misclassified only in VVRKHS and the number $255$ misclassified only in VVRKBS.
The original images of $9$ misclassified digits for both models are displayed in Figure \ref{Fig1}. The numerical performances for both models are comparable.

\begin{table}[htbp]
\centering
\caption{Comparison of the classification for digits $6,8,9$ in VVRKHS and in VVRKHS with the $\ell^1$ norm for the exponential kernel with variance $r=70$.} \label{Tab2}
\begin{tabular}{|c|c|c|c|} \hline
&Accuracy for training data & Sparsity&Accuracy for testing data\\ \hline
VVRKHS& 100\% &5550  &98.44\% \\    
VVRKBS& 100\% & 1455  &98.44\% \\ \hline
\end{tabular}
\end{table}

\begin{table}[htbp]
\centering
\caption{Misclassified digits $6,8,9$ in VVRKHS and in VVRKBS with the $\ell^1$ norm for the exponential kernel with variance $r=70$.} \label{Tab3}
\begin{tabular}{|ccc||ccc|} \hline
&VVRKHS&&&VVRKBS&  \\
Numbers&True labels&Predicted labels&Numbers&True labels& Predicted labels \\ \hline
56&6&8&56&6&8  \\ \hline
58&8&6&58&8&6  \\ \hline
{\bf 64}&8&6&73&9&8  \\ \hline
73&9&8&113&8&9 \\ \hline
113&8&9&212&8&9 \\ \hline
212&8&9&{\bf 255}&8&9 \\ \hline
430&8&9&430&8&9  \\ \hline
480&9&8&480&9&8  \\ \hline
\end{tabular}
\end{table}

\begin{figure}[hbtp]
\centering
\includegraphics[width=6in]{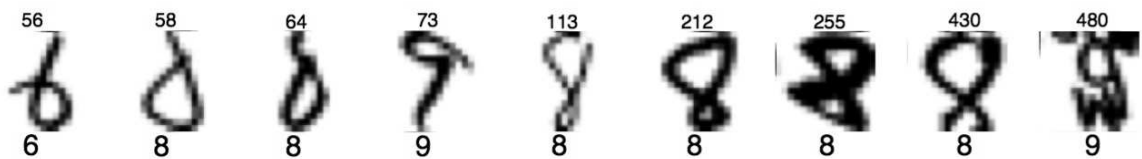}\\
\caption{Misclassified digits $6,8,9$ for both regularization network models.}\label{Fig1}
\end{figure}

To sum up, numerical experiments for both synthetic data and real-world benchmark data have shown us the advantages of multi-task learning in the vector-valued RKBSs with the $\ell^1$ norm.

\end{document}